\documentclass[11pt]{amsart}
\usepackage{amssymb,amsmath,amsthm}
\usepackage{epstopdf}
\usepackage{mdframed}
\usepackage{empheq}
\usepackage{bm}
\usepackage{hyperref}
\usepackage{enumitem}
\usepackage{subcaption}
\usepackage{todonotes}
\usepackage[round]{natbib}
\allowdisplaybreaks[4]
\setcitestyle{numbers,square}

\DeclareGraphicsRule{.tif}{png}{.png}{`convert #1 `dirname #1`/`basename #1 .tif`.png}

\setlength{\topmargin}{-.2in} \setlength{\marginparsep}{-2in}
\setlength{\marginparwidth}{1in}
\setlength{\textwidth}{6.8in}
\setlength{\textheight}{8.2in}
\setlength{\oddsidemargin}{-.2in}
\setlength{\evensidemargin}{-.2in}

\newtheorem{theorem}{Theorem}
\newtheorem{definition}[theorem]{Definition}

\newtheorem{lemma}[theorem]{Lemma}

\newtheorem{remark}[theorem]{Remark}


\newcommand\uX{\underline X}


\newcommand\bx{\underline x}
\newcommand\ua{\underline \alpha}


\newcommand\cA{\mathcal A}

\newcommand\cC{\mathcal C}

\newcommand\cF{\mathcal F}

\newcommand\cI{\mathcal I}
\newcommand\cL{\mathcal L}

\newcommand\cP{\mathcal P}

\newcommand\cX{\mathcal X}
\newcommand\cY{\mathcal Y}
\newcommand\cZ{\mathcal Z}
\newcommand\cW{\mathcal W}

\def\AA{\mathbb{A}}
\newcommand\EE{\mathbb E}

\newcommand\PP{\mathbb P}
\newcommand\NN{\mathbb N}
\newcommand\RR{\mathbb R}



\def\AA{\mathbb{A}}

\newcommand{\x}{\bx}

\newcommand{\xdim}{\ell}

\makeatletter
\renewcommand\l@paragraph[2]{}
\renewcommand\l@subparagraph[2]{}
\makeatother
\setcounter{tocdepth}{2}
\setcounter{secnumdepth}{3}


\DeclareMathOperator*{\argmin}{arg\,min}





\title[LDP for large population games]{Laplace principle for large population games with control interaction}

\author{Peng Luo \and Ludovic Tangpi}
\date{\today}
\keywords{Large population games, mean field games, interaction through controls, large deviation principle, FBSDE,
McKean-Vlasov equations, concentration of measure, PDEs on Wasserstein space.}
	\thanks{}
	\subjclass[2010]{60F25, 91A06, 91A13, 60J60, 28C20, 60H20, 35B40.}

\begin{document}

\begin{abstract}
	This work investigates continuous time stochastic differential games with a large number of players whose costs and dynamics interact through the empirical distribution of both their states and their controls.
	The control processes are assumed to be open-loop.
	We give regularity conditions guaranteeing that if the finite-player game admits a Nash equilibrium, then both the sequence of equilibria and the corresponding state processes satisfy a Sanov-type large deviation principle.
	The results require existence of a Lipschitz continuous solution of the master equation of the corresponding mean field game, and they carry over to cooperative (i.e. central planner) games.
	We study a linear-quadratic case of such games in details.
\end{abstract}

\maketitle


\section{Introduction}

This paper is a sequel to \cite{pontryagin} in which the convergence of symmetric, continuous time stochastic differential games to mean field games was analyzed.
Here the goal is to complement the convergence results by deriving large deviation principles (in Laplace form) for the sequence of Nash equilibrium and the associated state processes. 
Let us briefly describe the stochastic differential game we consider, in its \emph{non-cooperative} version.
The cooperative case is discussed in the last section of the article.
Let $d \in \mathbb{N}$ be fixed and let $(\Omega,\mathcal{F}, \PP)$ be a probability space which is rich enough to carry a (fixed) sequence of $d$-dimensional independent Brownian motions $(W^i)_{i\ge1}$.
Given $N \in \mathbb{N}$ and $T>0$, consider the filtration $\mathbb{F}^N:=(\mathcal{F}_t^N)_{t\in[0,T]}$, which is the $\PP$--completion of $\sigma(W^i_s, s\le t, i=1,\dots, N)$.
Without further mention, we will always use the identification
\begin{equation*}
	W \equiv W^1, \quad  \cF_t \equiv \cF^1_t\quad  \text{and}\quad \mathbb{F} \equiv \mathbb{F}^1.
\end{equation*}
Given two functions $f$ and $g$, the cost that agent $i$ seeks to minimize, when the strategy profile of the $N$ players is $\underline \alpha := (\alpha^1,\dots,\alpha^N)$, is
$$
	J(\alpha^i; \underline \alpha^{-i}):=
	J^i(\underline \alpha) :=
	\EE \bigg[ \int_0^T f(t,X^{i, \underline \alpha}_t, \alpha^i_t, L^{N}(\underline X^{\underline \alpha}_t, \underline \alpha_t)) dt + g(X^{i, \underline \alpha}_T, L^{N}(\underline X^{\underline\alpha}_T) )\bigg],
$$
where we denote 
$$\underline x^{-i}:= (x^1,\dots,x^{i-1}, x^{i+1},\dots, x^N)\quad \text{and} \quad \x := (x^1,\dots, x^N),$$
and where the position $X^{i,\ua}$ of player $i$ is given by the controlled diffusion
\begin{equation}
\label{eq:N-SDE}
	dX^{i, \underline \alpha}_t = b\big(t,X^{i, \underline \alpha}_t, \alpha^i_t, L^{N}(\underline X_t^{\underline\alpha}, \underline \alpha_t)\big) dt + \sigma dW^i_t, \quad X^{i, \underline\alpha}_0 =x \in \mathbb{R}^\ell,
\end{equation}
$\ell \in \mathbb{N}$.
The term $L^{N}(\underline X_t^{\underline\alpha}, \underline \alpha_t)$ models the \emph{interaction} between the players.
We assume that the players are in weak interaction through the empirical distribution of both the states and the strategies of the whole system.
That is, we let
\begin{equation*}
	L^N(\x) := \frac1N\sum_{i=1}^N\delta_{x^i}.
\end{equation*}
It is interesting to notice that in this model, the players interact not only through their respective states (or positions) $X^{i,\underline\alpha}$, but through their \emph{controls} $\alpha^{i}$ as well.
The admissible set $\cA$ of the controls is defined as follows: given $m \in \NN$ and a closed convex set $\mathbb{A}\subseteq \mathbb{R}^m$, we let
	\begin{equation*}
		\mathcal{A}:= \bigg\{\alpha:[0,T]\times \Omega\to \mathbb{A},\,\, \mathbb{F}^N\text{-progressive such that }  \EE\bigg[\int_0^T|\alpha_t|^2\,dt \bigg]<\infty \bigg\}.
	\end{equation*}
As usual, one is interested in a Nash equilibrium $\hat{\underline \alpha}:= (\hat\alpha^{i},\dots, \hat\alpha^{N})$.
That is, admissible strategies $(\hat\alpha^{1},\dots,\hat\alpha^N)$ such that for every $i=1,\dots,N$ and $\alpha \in \mathcal{A}$ it holds that
\begin{equation*}
 	J^i(\hat{\underline\alpha}) \le J(\alpha;\hat{\underline\alpha}^{-i}).
\end{equation*}

Denoting by $\mathcal{P}_2(\mathbb{R}^\xdim \times\mathbb{R}^m)$ (respectively $\mathcal{P}_2(\mathbb{R}^\xdim)$) the set of probability measures with finite second moment on $\mathbb{R}^\xdim \times \mathbb{R}^m$ (respectively $\mathbb{R}^\xdim$), one formally associates the above $N$-player game to the following mean field game:
Given a flow of distributions $(\xi_t)_{t \in [0,T]}$ with $\xi_t \in \cP_2(\mathbb{R}^\xdim\times \mathbb{R}^m)$ with first marginal $\mu_t\in \cP_2(\mathbb{R}^\xdim)$, consider a solution $\hat\alpha^\xi$ of the control problem
\begin{equation*}
	\begin{cases}
		\inf_{\alpha \in \mathfrak{A}}\EE\left[\int_0^T f(t,X^\alpha_t, \alpha_t,\xi_t) dt + g(X^\alpha_T,\mu_T) \right]\\
		dX^\alpha_t = b(t,X^\alpha_t, \alpha_t, \xi_t) dt + \sigma dW_t, \qquad X^\alpha_0 =x.
	\end{cases}
\end{equation*}
Hereby, $\mathfrak A$ is the set of admissible controls for the mean field game is defined as
\begin{equation*}
	\mathfrak{A}:= \bigg\{\alpha:[0,T]\times \Omega\to \mathbb{A},\,\, \mathbb{F}\text{-progressive such that }  \EE\bigg[\int_0^T|\alpha_t|^2\,dt \bigg]<\infty \bigg\}.
\end{equation*}
A mean field equilibrium is a strategy $\hat\alpha^\xi \in \mathfrak A$ satisfying the fixed point (or consistency) condition
\begin{equation*}
	\xi_t = \cL(X^{\hat\alpha^\xi}_t, \hat\alpha^\xi_t)\quad \text{for all } t.
\end{equation*}
Such a mean field game is usually referred to as "mean field game of controls" or "extended mean field game" and has been introduced by \citet{MR3160525,Gomes-Mohr-Sou13}.
The interest in this type of mean field games quickly blossomed, mainly due to their natural applications in economics and finance, see e.g. \cite{MR3805247,MR3359708,MR3755719} or \cite[Section 3.3.1]{MR3195844} and energy production models \cite{MR3575612,alasseur2017extended}.

When the interaction among the players is through the state only, a rigorous connection between the $N$-player and the mean field game was first established by \citet{lacker2016general} and \citet{Fischer17}, proving compactness results for sequences of Nash equilibria, using relaxed controls.
\citet{carda15} used existence and regularity properties of solutions of the so-called master equation, a PDE on the space of probability measures characterizing the mean field game to prove convergence results for the value functions.
Moreover, under proper smoothness conditions on the solution (when it exists) of the master equation, \citet{Del-Lac-Ram_Concent,Del-Lac-Ram-19} further analyzed the mean field game limit for the state processes at equilibrium of the game with common noise.
Notably, \citet{Del-Lac-Ram_Concent} derive the large deviation principle for \emph{closed-loop} equilibria.

The mean field game limit in the present context, where interaction is also through the control, was first proved by \citet{pontryagin}.
In this setting we should also mention results by \citet{Bellman-limit} for games in the weak formulation and \citet{DjeteMFG} using relaxed controls.
In the nutshell, it was proved in \cite{pontryagin} that when the coefficients of the game are sufficiently regular (see the proper statement below) and the $N$-player game admits at least one Nash equilibrium, then this Nash equilibrium converges in the $\mathbb{L}^2$--sense to a mean field equilibrium.
The aim of the present work is to make a finer analysis of this convergence result by deriving large deviation principles both for the Nash equilibrium and the state processes of the players at equilibrium.
We will recall the concept of large deviation principle (LDP) in Section \ref{sec:summary.pontryagin}; but note already that this allows to quantify the probability that the Nash equilibrium does not converge to the mean field equilibrium, a rare event in view of the results in \cite{pontryagin}. 
Our main results (presented in the next section) state that if in addition to the convergence assumptions of \cite{pontryagin} we assume that a certain (master) equation admits a Lipschitz--continuous solution, then the Nash equilibrium satisfies the LDP.
In contrast to \citet{Del-Lac-Ram_Concent}, our model includes interaction of the players through their controls, but also deals with \emph{open-loop} equilibria.
In addition, we propose a totally different proof, one that is based on forward backward stochastic differential equations (FBSDE) and their decoupling fields.
We will elaborate more on the idea of proof in the discussion following the statement of Theorem \ref{thm:LDP-Tlarge}.
Note that this method seems versatile enough to be applied to the LDP of \emph{cooperative games}.

When the players jointly optimize the cost function 
\begin{equation*}
	\frac{1}{N}\sum_{i=1}^N\EE\bigg[ \int_0^T f(t,X^{i, \underline \alpha}_t, \alpha^i_t, L^{N}(\underline X^{\underline \alpha}_t, \underline \alpha_t)) dt + g(X^{i, \underline \alpha}_T, L^{N}(\underline X^{\underline\alpha}_T) )\bigg],
\end{equation*}
we obtain a ``central planner'' problem (or cooperative game) which has been showed to converge to a stochastic control problem of McKean-Vlasov type, see \cite{LackerSICON17,pontryagin,Djete2020}.
The analysis of the LDP in this case follows the same steps as in the non-cooperative game described above.
To avoid repeating the proof, in the case of cooperative games we focus on a linear--quadratic game and again derive an LDP, see Section \ref{sec:ldp-mfc} for details.

In the next section we make precise the assumptions used in \cite{pontryagin} as well as the additional assumptions needed for the LDP and state the main results of the paper, Theorems \ref{thm:LDP} and \ref{thm:LDP-Tlarge}.
The proof of these results is the subject of Section \ref{sec:ldp.nash}.
In the last section of the article we consider the case of cooperative stochastic differential games, and restrict ourselves to the linear-quadratic case.

\section{Setting and main results}
\label{sec:summary.pontryagin}

This section is dedicated to the presentation of the main results of this article. 
That is, the large deviation principle for stochastic differential games.
We will focus here on the case of non-cooperative games.
The cooperative case will be discussed in the last section.
The setting is exactly the same as that of \cite{pontryagin}.
We recall it here for the reader's convenience.
Throughout, we denote the set of probability measures with finite second moment on a Polish space $E$ by $\cP_2(E)$, and we equip it with the second order Wasserstein distance denoted $\mathcal{W}_2(\xi, \xi')$ for $\xi,\xi' \in \cP_2(E)$, and defined as
\begin{equation*}
	\cW_2^2(\xi, \xi') := \inf_{\pi}\iint_{E\times E}d_E(x,y)\pi(dx, dy)
\end{equation*}
where $d_E$ is the distance on $E$ and the infimum is taken over coupling of $(\xi, \xi')$, i.e. probability measures $\pi$ on $E\times E$ with first and second marginals $\xi$ and $\xi'$ respectively \footnote{When $E$ is $\mathbb{R}^e$ for some $e \in \mathbb{N}$, $d_E$ is taken to be the usual Euclidean distance.}.
We also denote by $\cP(E)$ the set of probability measures on $E$ and equip this set with the weak topology. 
We denote by $\partial_ah,\partial_{x}h$ the partial derivatives of a function $h$ in the variable of $a\in\mathbb{R}^m,x\in\mathbb{R}^\xdim$.

Recall that a function $\Gamma:\mathcal{P}_2(\mathbb{R}^e)\to \mathbb{R}$ (with $e\in \mathbb{N}$) is $L$--differentiable if there is a continuous function $\partial_\xi\Gamma:\cP_2(\mathbb{R}^e)\times\mathbb{R}^e\to \mathbb{R}$  satisfying the following two properties:
\begin{itemize}
		\item for every $\xi, \xi'\in \cP_2(\mathbb{R}^e)$ it holds
	\begin{equation*}
		\Gamma(\xi) - \Gamma(\xi') = \int_0^1\int_{\mathbb{R}^e}\partial_\xi\Gamma\big((1-t)\xi +t\xi' \big)(x)(\xi' - \xi)(dx)\,dt
	\end{equation*}
	\item $\partial_\xi\Gamma$ is uniformly of quadratic growth on compacts: That is, for every compact set $K \subseteq \cP_2(\mathbb{R}^e)$, there exists a constant $C>0$ such that $|\partial_\xi\Gamma(\xi)(x)| \le C(1 + |x|^2)$ for all $ \xi \in K$ and $x \in \mathbb{R}^e$.
\end{itemize}
see for instance ~\cite{MR2401600,PLLcollege} or~\cite[Chapter 5]{MR3752669} for further details.
In particular, the derivative is uniquely defined, up to an additive constant.
In the rest of the paper, we will use the notation  $\partial_\xi h, \partial_\mu h$ and $\partial_\nu h$ for the $L$-derivative of a function $h$ in the variable of the probability measure $\xi \in \mathcal{P}_2(\mathbb{R}^{\xdim}\times \mathbb{R}^m)$, $\mu \in \mathcal{P}_2(\mathbb{R}^{\xdim})$ and $\nu \in \mathcal{P}_2(\mathbb{R}^{m})$, respectively.

Throughout the paper, $C$ denotes a generic strictly positive constant.
In the computations, the constant $C$ can change from line to line, but this will not always be mentioned.
\emph{However, $C$ will never depend on $N$.}
Consider the following conditions: 
\begin{enumerate}[label = (\textsc{A1}), leftmargin = 30pt]
	\item The function $b:[0,T]\times \mathbb{R}^\xdim\times \mathbb{R}^m\times \mathcal{P}_2(\mathbb{R}^\xdim\times \mathbb{R}^m) \to \mathbb{R}^\xdim$ is continuously differentiable in its last three arguments and satisfies the Lipschitz--continuity and linear growth conditions
	\begin{equation*}
		\begin{split}
		& |b(t,x,a,\xi) - b(t,x', a', \xi')| \le L_b\big(|x-x'| +|a -a'| + \mathcal{W}_2(\xi,\xi')\big)
		\\
		& |b(t,x,a,\xi)| \le L_b\bigg(1+|x| + |a|+ \Big(\int_{\mathbb{R}^{\xdim+m}}|v|^2\,\xi(dv)\Big)^{1/2} \bigg)
		 \end{split}
	\end{equation*}
	for some $L_b>0$ and all $x, x' \in \mathbb{R}^\xdim$, $a,a' \in \mathbb{R}^m$, $t \in [0,T]$ and $\xi,\xi' \in \cP_2(\mathbb{R}^\xdim\times \mathbb{R}^m)$.

	The functions $f:[0,T]\times \mathbb{R}^\xdim\times \mathbb{R}^m\times \mathcal{P}_2(\mathbb{R}^\xdim\times \mathbb{R}^m) \to \mathbb{R}$ and $g:\mathbb{R}^\xdim\times \mathcal{P}_2(\mathbb{R}^\xdim) \to \mathbb{R}$ are of quadratic growth:
	\begin{align*}
		&|f(t,x,a,\xi)| \le L_f\Big(1+|x|^2 + |a|^2 + \int_{\mathbb{R}^{\xdim+m}}|v|^2\,\xi(dv) \Big)\\
		&|g(x,\mu)| \le L_g\Big(1 +|x|^2 + \int_{\mathbb{R}^{\xdim}}|v|^2\,\mu(dv) \Big)
	\end{align*}
	for some $L_f,L_g>0$ and all $x \in \mathbb{R}^\xdim$, $a\in \mathbb{R}^m$, $t \in [0,T]$, $\xi \in \cP_2(\mathbb{R}^\xdim\times \mathbb{R}^m)$ and $\mu \in \cP_2(\mathbb{R}^\ell)$. 
	Moreover, $f$ is continuously differentiable in its last three arguments and $g$ is continuously differentiable.
\label{a1}
\end{enumerate}
\begin{enumerate}[label = (\textsc{A2}), leftmargin = 30pt]
	\item The functions $b$ and $f$ can be decomposed as
	\begin{equation}
	\label{eq:decom bf}
		b(t,x,a,\xi) := b_1(t,x, a, \mu) + b_2(t, x, \xi)\quad \text{and}\quad f(t,x,a,\xi) = f_1(t,x,a,\mu) + f_2(t,x,\xi)
	\end{equation}
	for some functions $b_1,b_2$, $f_1$ and $f_2$, where $\mu$ is the first marginal of $\xi$.
	\label{a2}
\end{enumerate}
\begin{enumerate}[label = (\textsc{A3}), leftmargin = 30pt]
	\item 
	Considering the Hamiltonian
	\begin{equation}
	\label{eq:def-H-hyp}
		H(t,x,y,a,\xi)= f(t,x,a,\xi) + b(t,x,a,\mu)y.
	\end{equation}
	 There is a constant $\gamma>0$ such that
	\begin{equation}
	\label{eq:strong convex}
		H(t,x,y,a,\xi) - H(t,x,y,a',\xi) - (a - a')\partial_aH(t,x,y,a,\xi) \ge\gamma |a-a'|^2
	\end{equation}
	and the functions $x\mapsto g(x, \mu)$ and $(x,a)\mapsto H(t, x, y,a, \xi)$ are convex for all $a,a' \in \mathbb{A}$, $(t,x,y, \xi) \in [0,T]\times \mathbb{R}^\ell\times \mathbb{R}^\ell \times \cP_2(\mathbb{R}^\ell\times \mathbb{R}^m)$.
	In addition, the functions 
	\begin{equation*}
		\text{$\partial_a H(t,\cdot, \cdot,\cdot,\cdot)$, $\partial_xH(t,\cdot, \cdot,\cdot,\cdot)$ and $\partial_xg(\cdot,\cdot)$ are Lipschitz-continuous with Lipschitz constant $L_f$}
	 \end{equation*}
	  and of the linear growth:
	\begin{equation*}
		\begin{cases}
			|\partial_xH(t, x,a,y,\xi)| \le L_f\Big(1 + |x|+ |y| + \big(\int_{\mathbb{R}^{\xdim}}|v|^2\mu(dv)\big)^{1/2} \Big)\\
			|\partial_aH(t, x,a,y,\xi)| \le L_f\Big(1 + |x|+ |a| + |y| + \big(\int_{\mathbb{R}^{\xdim+m}}|v|^2\xi(dv)\big)^{1/2} \Big)\\
			|\partial_xg(x,\mu)| \le L_f\Big(1 +|x| + (\int_{\mathbb{R}^{\xdim}}|v|^2\mu(dv))^{1/2} \Big)
		\end{cases}
	\end{equation*} 
	for all $(t,x,a,\xi)\in [0,T]\times \mathbb{R}^\xdim\times\mathbb{A}\times \mathcal{P}_2(\mathbb{R}^{\xdim}\times \mathbb{R}^m)$, where $\mu$ is the first marginal of $\xi$.
	\label{a3}
\end{enumerate}
\begin{enumerate}[label = (\textsc{A4}), leftmargin = 30pt]
	\item For every $(t,x,a,\xi)\in [0,T]\times \mathbb{R}^\xdim\times\mathbb{A}\times \mathcal{P}_2(\mathbb{R}^{\xdim}\times \mathbb{R}^m)$ and $(u,v) \in \mathbb{R}^\xdim\times\mathbb{R}^m$ we have 
	\begin{equation*}
	\begin{cases}
		| \partial_\mu b(t,x,a,\xi)(u)| \le L_b \\
		 | \partial_\xi f(t,x,a,\xi)(u,v) |\le L_f\Big(1 + |u| + |x| + \Big(\int_{\mathbb{R}^{\xdim}}|v|^2\mu(dv) \Big)^{1/2} \Big)\\
		| \partial_\mu g(x,\mu)(u)| \le L_f\Big(1 + |u|  + |x| +\Big(\int_{\mathbb{R}^\xdim}|v|^2\mu(dv) \Big)^{1/2} \Big)
		\end{cases}
	\end{equation*}
	where $\mu$ is the first marginal of $\xi$.
\label{a4}
\end{enumerate}	
\begin{enumerate}[label = (\textsc{A5}), leftmargin = 30pt]
	\item The matrix $\sigma$ is uniformly elliptic. 
	That is, there is a constant $c>0$ such that
	\begin{equation*}
	\langle \sigma\sigma'x,x\rangle \ge c|x|^2 \quad \text{for every $x \in \mathbb{R}^{\xdim}$.}
\end{equation*}
	\label{a5}
\end{enumerate}
The conditions \ref{a1}-\ref{a5} are essentially regularity and structural conditions on the coefficients of the game.
In \cite{pontryagin}, these conditions are imposed to guarantee convergence of the Nash equilibrium of the $N$-player game to the mean field equilibrium of the associated mean field game. In particular, \ref{a2} is needed due to our method which is based on deriving representations of equilibria in terms of the state process and some adjoint processes.
Such decompositions are typically used in the literature, see e.g. \citet{Car-Del15}. The following is \cite[Theorem 1]{pontryagin}.

\begin{theorem}
\label{thm:main limit}
	Let conditions \ref{a1}-\ref{a5} be satisfied.
	Assume that the $N$-player game admits a Nash equilibrium $\hat{\underline\alpha}^{N} \in \mathcal{A}^N$.
	Then there is $\delta>0$ such that if $T\le \delta$, for each $i = 1,\dots, N$ the sequence $(\hat\alpha^{i,N})$ converges to an admissible control $\hat\alpha^i$ which is a mean field equilibrium and it holds that
	\begin{equation*}
		\mathbb{E}[|\hat{\alpha}^{i,N}_t - \hat\alpha_t^i|^2] \le C r_{N,m,\xdim} 
	\end{equation*}
	for all $N \in \mathbb{N}$ large enough and some constant $C>0$	where, $r_{N,m,\xdim}$ is a rate depending on $N,m,\xdim$ such that $r_{N,m,\xdim}\downarrow 0$ as $N\to \infty$. 
\end{theorem}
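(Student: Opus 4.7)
My approach is to apply Pontryagin's stochastic maximum principle to obtain coupled forward-backward SDE (FBSDE) systems characterizing both the $N$-player Nash equilibrium and the mean field equilibrium, and then to estimate their difference via a small-$T$ contraction in $L^2$, together with a synchronous coupling of $N$ i.i.d.\ mean field solutions.

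First, under \ref{a1} and \ref{a3}, the Hamiltonian $H$ defined in \eqref{eq:def-H-hyp} is strongly convex in $a$ and jointly convex in $(x,a)$, while $g$ is convex in $x$. Thanks to the decomposition \ref{a2}, $\partial_a H$ depends on the measure argument only through the state marginal $\mu$, so $\partial_a H(t,x,y,a,\mu)=0$ uniquely defines a Lipschitz map $\hat a(t,x,y,\mu)$. Under these convexity conditions Pontryagin's principle is both necessary and sufficient, so a Nash equilibrium is characterized, for each $i$, by
\begin{equation*}
\begin{cases}
dX^i_t = b(t, X^i_t, \hat\alpha^{i,N}_t, L^N(\underline X_t, \hat{\underline\alpha}^N_t))\,dt + \sigma\,dW^i_t, \qquad X^i_0 = x, \\
dY^i_t = -\partial_x H(t, X^i_t, Y^i_t, \hat\alpha^{i,N}_t, L^N(\underline X_t, \hat{\underline\alpha}^N_t))\,dt + \sum_{j=1}^N Z^{i,j}_t \,dW^j_t, \\
Y^i_T = \partial_x g(X^i_T, L^N(\underline X_T)), \\
\hat\alpha^{i,N}_t = \hat a(t, X^i_t, Y^i_t, L^N(\underline X_t)),
\end{cases}
\end{equation*}
while a mean field equilibrium solves the McKean--Vlasov FBSDE
\begin{equation*}
\begin{cases}
d\bar X_t = b(t, \bar X_t, \hat\alpha_t, \cL(\bar X_t, \hat\alpha_t))\,dt + \sigma\,dW_t, \qquad \bar X_0 = x, \\
d\bar Y_t = -\partial_x H(t, \bar X_t, \bar Y_t, \hat\alpha_t, \cL(\bar X_t, \hat\alpha_t))\,dt + \bar Z_t\,dW_t, \\
\bar Y_T = \partial_x g(\bar X_T, \cL(\bar X_T)), \\
\hat\alpha_t = \hat a(t, \bar X_t, \bar Y_t, \cL(\bar X_t)),
\end{cases}
\end{equation*}
which is well-posed in $L^2$ for sufficiently small $T$ by a Banach fixed-point argument on the measure-to-solution map.

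Next, I would construct $N$ independent copies $(\bar X^i, \bar Y^i, \bar Z^i, \hat\alpha^i)$ of the mean field solution, each driven by $W^i$ on the same probability space. These are i.i.d., and each $\hat\alpha^i$ is the candidate limit of $\hat\alpha^{i,N}$. Subtracting the two systems and invoking the Lipschitz regularity of $b$, $\partial_x H$, $\partial_x g$, and $\hat a$ (from \ref{a1}, \ref{a3}, \ref{a4}) together with standard BSDE $L^2$ estimates, one obtains
\begin{equation*}
\Delta_i := \sup_{t\le T}\EE\big[|X^i_t - \bar X^i_t|^2 + |Y^i_t - \bar Y^i_t|^2\big] \le CT\, \Delta_i + C\sup_{t\le T}\EE\big[\cW_2^2\big(L^N(\underline{\bar X}_t, \hat{\underline\alpha}_t),\, \cL(\bar X_t, \hat\alpha_t)\big)\big].
\end{equation*}
For $T \le \delta$ with $C\delta < 1/2$, the first term is absorbed into the left-hand side; the remaining empirical-measure term is bounded using Fournier--Guillin-type rates for $N$ i.i.d.\ samples in $\RR^{\xdim+m}$, which is precisely $r_{N,m,\xdim}\downarrow 0$. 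Lipschitz continuity of $\hat a$ in $(x,y,\mu)$ then transfers this bound to $\EE[|\hat\alpha^{i,N}_t - \hat\alpha_t|^2]$.

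The main obstacle is the circularity inherent in the $N$-player FBSDE: the backward component $Y^i$ depends on all equilibrium controls $(\hat\alpha^{j,N})_{j\neq i}$ through the empirical measure, so the system cannot be decoupled player by player. The decomposition \ref{a2} and the smallness of $T$ are both essential---the former ensures that the control interaction enters only through the forward and backward coefficients and not through the pointwise control map $\hat a$, while the latter supplies the contraction needed to close the Gronwall argument with constants independent of $N$. A secondary subtlety is identifying the limit: one must argue \emph{a posteriori} that any $L^2$ limit point $\hat\alpha$ is a genuine mean field equilibrium, which follows by passing to the limit in the FBSDE using the uniform-in-$N$ Lipschitz estimates together with \ref{a5} to control the martingale components.
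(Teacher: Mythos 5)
Your overall strategy---Pontryagin for both problems, synchronous coupling with $N$ i.i.d.\ mean field copies, small-$T$ contraction, Fournier--Guillin rates for the empirical measure---matches the route in \cite{pontryagin} (where this theorem is actually proved; the present paper cites it and only sketches the key representation in Section~\ref{sec:ldp.nash}), but the FBSDE you claim characterizes the $N$-player Nash equilibrium is not the one Pontryagin's principle delivers. Pontryagin for player $i$ treats the full vector $\underline X = (X^1,\dots,X^N)$ as the state, so the adjoint is the $N$-family $(Y^{i,1},\dots,Y^{i,N})$, not a single $Y^i$; and because $L^N(\underline X_t,\underline\alpha_t)$ itself depends on $X^i_t$ and $\alpha^i_t$, the derivatives $\partial_{x^i}H^{N,i}$, $\partial_{\alpha^i}H^{N,i}$, and $\partial_{x^i}g^{N,i}$ each pick up $O(1/N)$ measure-derivative terms such as $\frac1N\partial_\mu f(\cdot)(X^i_t)$, $\frac1N\sum_k\partial_\mu b(\cdot)(X^i_t)Y^{i,k}_t$, and the analogous $\partial_\nu$-contributions from $f_2,b_2$. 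The correct diagonal system is \eqref{eq:true.system.N}: the terminal condition is $\partial_x g(X^i_T,L^N)+\gamma^{i,N}$, not $\partial_x g(X^i_T,L^N)$; the drift carries $-\varepsilon^{i,N}$; and the optimality map is $\widehat\Lambda(t,X^i,Y^{i,i},L^N,\zeta^{i,N})$ with an extra argument, not $\hat a(t,X^i,Y^i,L^N)$. As written, your claim that the equilibrium is ``characterized, for each $i$, by'' your FBSDE is false.

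This is not a cosmetic omission. The terms $\varepsilon^{i,N},\gamma^{i,N},\zeta^{i,N}$ couple the diagonal $Y^{i,i}$ to the off-diagonal processes $Y^{i,j}$, $j\neq i$, so your ``characterizing'' system in $(X^i,Y^i)$ is not even closed; and showing that these corrections are uniformly $O(1/N)$ requires a separate a priori bound on $\sum_j|Y^{i,j}|$ (cf.\ Lemma~\ref{lem:bound_Y}), which in turn uses the growth conditions of \ref{a4} on $\partial_\mu b,\partial_\mu f,\partial_\mu g$. Once the corrections are included and bounded, your contraction-plus-propagation-of-chaos estimate closes as you describe, with the $O(1/N)$ contributions dominated by the Fournier--Guillin term $r_{N,m,\xdim}$. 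So the plan is sound, but you must start from the full adjoint system \eqref{eq:Y.ij}, extract the diagonal, and carry the $O(1/N)$ corrections through the estimate explicitly.
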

The case of arbitrarily large time is treated in that paper under additional monotonicity conditions as in Theorem \ref{thm:LDP-Tlarge} below.
In order to derive a large deviation principle, we will strengthen the growth conditions on the derivatives into boundedness conditions, and more importantly, require the master equation to admit a Lipschitz--continuous classical solution.
\begin{enumerate}[label = (\textsc{A6}), leftmargin = 30pt]
	\item The functions $\partial_xg$, $\partial_\mu g$, $\partial_xf$ and $\partial_\xi f$ are bounded.
	\label{a65}
\end{enumerate}
\begin{enumerate}[label = (\textsc{A7}), leftmargin = 30pt]
	\item There is a measurable function such that
	\begin{equation*}
	 	\Lambda(t, x, y, \mu) \in \argmin_{a \in \mathbb{A}}H(t, x, y,\xi, a),
	\end{equation*}
	where $\mu$ is the first marginal of $\xi$.
	Consider the function $ \varphi_t :\cP_2(\RR^\xdim\times \RR^m) \to \cP_2(\RR^\xdim\times \RR^m)$ given by
	\begin{equation}
	\label{eq:def.phi}
		\varphi_t (\xi) := \xi\circ (id_\xdim, {\Lambda(t,\cdot, \cdot, \mu)})^{-1}
	\end{equation}
	where $id_\xdim$ is the projection on $\RR^\xdim$ and $\mu$ the first marginal of $\xi$ and the functions
	\begin{equation*}
		 B(t, x, y, \xi ) := b\big(t,x, {\Lambda(t,x, y, \mu)}, \varphi_t(\xi)\big),\quad G(x, \mu) = \partial_xg(x,\mu).
	\end{equation*}
	and
	\begin{align*}
		F(t, x, y, \xi) &:=  \partial_{x} f\big(t,x, {\Lambda(t,x, y, \mu)}, \varphi_t(\xi) \big)  +  \partial_{x} b\big(t,x,\Lambda(t,x, y, \mu), \varphi_t(\xi)\big)y
	\end{align*}
	where $\mu$ is the first marginal of $\xi$.
	The following PDE\footnote{Note that the term $\text{tr}(\partial_{xx}V(t,x,\mu)\sigma\sigma')$ is to be understood coordinate-wise, that is, putting $V = (V^1,\dots,V^\xdim)^\top$ we write $\text{tr}(\partial_{xx}V(t,x,\mu)\sigma\sigma') = \left(\text{tr}(\partial_{xx}V^i(t,x,\mu)\sigma\sigma') \right)_{i}$.} admits a (classical) solution $V:[0,T]\times \mathbb{R}^\xdim\times \cP_2(\mathbb{R}^\xdim)\to \mathbb{R}^\ell$ which is Lipschitz--continuous in its second and third arguments uniformly in $t$: 	
	\begin{align}
	\label{eq:master pde}
		\begin{cases}
			\partial_tV(t,x,\mu) + B(t,x,V(t,x,\mu),\xi)\partial_xV(t,x,\mu) + \frac12\text{tr}(\partial_{xx}V(t,x,\mu)\sigma\sigma')
			\\
			\quad + F(t,x,V(t,x,\mu), \xi)
			 \displaystyle + \int_{\RR^d}\partial_\mu V(t,x,\mu)(y)\cdot B(t,y,V(t,x,\mu),\xi)\mu(dy)\\
			 \quad + \int_{\RR^d}\frac12\text{tr}\left(\partial_y\partial_{\mu}V(t,x,\mu)(y) \sigma\sigma'\right)\,\mu(dy) = 0,\qquad (t,x,\mu) \in [0,T)\times \RR^\xdim \times \mathcal{ P}_2(\RR^\xdim)
	 		\\
	 		V(T, x, \mu) = G(x,\mu), \qquad\qquad\qquad\qquad\qquad\qquad\,\,\, (x,\mu) \in \RR^\xdim \times \mathcal{ P}_2(\RR^\xdim)
		\end{cases}
	\end{align}
	where $\xi$ is the joint law of $(\chi, V(t, \chi, \mu))$ when $\cL(\chi) = \mu$.
	\label{a6}
\end{enumerate}

The main result of the present work is a refinement of Theorem \ref{thm:main limit} into a LDP both for a Nash equilibrium and the state processes at equilibrium.
It is well-known by the celebrated Varadhan-Bryc equivalence, given in \cite{bryc90,Dembo-Zeitouni} (see also \cite[Section 1.2]{Dupuis-Ellis97}) that the LDP is equivalent to the so-called Laplace principle which can be stated as follows:
Given a function $\mathcal{I}:\cP(E)\to [0,\infty]$ with (weakly) compact\footnote{Also see \cite[Theorem 4.1]{CommanTAMS} for cases under which the compactness condition can be removed.} sublevel sets $\{\mu \in \cP(E): \mathcal{I}(\mu) \le a \}$ called a (good) rate function, a sequence of measures $(\mu^N)_{N \in \NN}$ on the Polish space $E$ satisfies the Laplace principle (in the weak topology) if for every bounded continuous function $F:\cP(E) \to \RR$ it holds 
\begin{equation*}
	\lim_{N\to \infty}-\frac1N\log(\mathbb{E}\left[\exp(-NF(\mu^N)) \right]) = \inf_{\mu \in \cP(E)}\left(F(\mu) + \mathcal{I}(\mu) \right).
\end{equation*}
In the statement of the result we use the following notation:
\begin{itemize}
	\item The set $\mathcal{U}$ is defined as the set of $((\Omega, \mathcal{F},P), (\mathcal{F}_t)_{t\in [0,T]}, u, W)$ such that the pair $((\Omega, \mathcal{F}, P), (\mathcal{F}_t)_{t\in [0,T]})$ forms a stochastic basis satisfying the usual conditions and carrying the $d$-dimensional Brownian motion $W$ and $u$ is an $\mathbb{R}^d$-valued $(\mathcal{F}_t)_{t\in [0,T]}$-progressive process satisfying $\mathbb{E}\left[\int_0^T|u_t|^2dt\right]<\infty$.
	\item By $\cC^e$ we denote the space of continuous maps from $[0,T]$ to $\mathbb{R}^e$.
	\item The map $\overline B:[0,T]\times \RR^\xdim \times \cP_2(\RR^\xdim)\to \RR^\ell$ is defined as
	\begin{equation}
	\label{eq:def.barB}
		\overline B(t,x,\mu) := B\Big(t, x, V(t, x, \mu), \cL\big(\chi, V(t, \chi, \mu) \big) \Big)
	\end{equation} where $\chi\in \mathbb{L}^2(\Omega, \cF,\PP)$ is an $\mathbb{R}^\xdim$-valued random variable with law $\cL(\chi) = \mu$, with $B$ defined in \ref{a6}.
	\item The map $\Psi:[0,T]\times \cP_2(\RR^\xdim)\to \cP_2(\RR^m)$ is defined as
	\begin{equation}
	\label{eq:def.Psi}
		\Psi( t, \mu) := \mu \circ \Lambda\big(t, \cdot, V(t, \cdot, \mu),\mu \big)^{-1} .
	\end{equation}
\end{itemize}
\begin{theorem}
\label{thm:LDP}
	If the conditions \ref{a1}-\ref{a6} are satisfied and the $N$-player game admits a Nash equilibrium $\underline{\hat\alpha}$, then, there is a constant $c(L_b,L_f,L_g)$ depending on (the Lipschitz constants of) $f,$ $b$ and $g$ such that if $T\le c(L_b,L_f,L_g)$, then the following hold:
	\begin{enumerate}
		\item[(i)] The sequence $(L^N(\underline X^{\underline{\hat\alpha}}))_N$ satisfies the LDP on $\cP(\cC^\xdim)$ with rate function\footnote{We use the convention $\inf\emptyset = \infty.$}
	\begin{equation}
	\label{eq:rate.X}
		\mathcal{I}(\theta) = \inf_{u\in \mathcal{U}: \mathrm{law}(X^u)=\theta}\EE\bigg[\frac12\int_0^T|u_t|^2\,dt\bigg], \quad \theta \in \cP(\mathcal{C}^\ell)
	\end{equation}
	where $dX^u_t = \overline B(t, X^u_t, \cL(X^u_t)) + \sigma u_t\,dt + \sigma\,dW_t$.
		\item[(ii)] If in addition, the functions $\Lambda:[0,T]\times \mathbb{R}^\xdim\times\mathbb{R}^\xdim \times \cP(\mathbb{R}^\xdim)\to \mathbb{R}^m $ and $V:[0,T]\times \mathbb{R}^\xdim\times \cP(\mathbb{R}^\xdim)\to \RR^\xdim$ are continuous on $\mathbb{R}^\ell\times \cP(\mathbb{R}^\xdim)$ for all $t\in [0,T]$, then the sequence $(L^N(\underline{\hat \alpha}_t))_N$ satisfies the LDP on $\cP(\RR^m)$ with rate function
		\begin{equation*}
			\mathcal{\widetilde I}_t(\nu) :=\inf_{\theta \in \cP_2(\mathcal{C}^\ell): \Psi(t,\theta_t) = \nu}\mathcal{I}(\theta),\quad \nu \in \cP(\mathbb{R}^m),
	\end{equation*} 
	where $\theta_t$ is the time $t$ marginal of $\theta$.
	\end{enumerate}
\end{theorem}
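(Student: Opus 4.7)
The plan is to characterize the Nash equilibrium via a coupled FBSDE, use the Lipschitz master solution $V$ from \ref{a6} to decouple it into a weakly interacting particle system, establish a Sanov-type LDP for that decoupled system, and transfer the result back by exponential equivalence. By the sufficient stochastic maximum principle (whose applicability is ensured by the convexity in \ref{a3} and the separability \ref{a2}), we have $\hat\alpha^{i,N}_t = \Lambda(t, X^{i,N}_t, Y^{i,N}_t, L^N(\underline X^N_t))$, where $(X^{i,N}, Y^{i,N})$ solves the $N$-player FBSDE with forward drift $B$, backward driver $F$ and terminal condition $Y^{i,N}_T = G(X^{i,N}_T, L^N(\underline X^N_T))$. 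I would then introduce the auxiliary McKean--Vlasov particle system driven by the same Brownian motions,
\[
 d\tilde X^{i,N}_t = \overline B\big(t, \tilde X^{i,N}_t, L^N(\underline{\tilde X}^N_t)\big)\,dt + \sigma\,dW^i_t,\qquad \tilde X^{i,N}_0 = x.
\]
Applying It\^o's formula to $V(t, \tilde X^{i,N}_t, L^N(\underline{\tilde X}^N_t))$ and invoking the master PDE \eqref{eq:master pde} shows that $\tilde Y^{i,N}_t := V(t, \tilde X^{i,N}_t, L^N(\underline{\tilde X}^N_t))$ solves the adjoint BSDE up to an $O(1/N)$ error stemming from the discrepancy between the empirical and true $L$-derivatives. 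The small-$T$ FBSDE contraction already exploited in Theorem \ref{thm:main limit} then yields $\EE[\sup_t|X^{i,N}_t-\tilde X^{i,N}_t|^2]\le C r_{N,m,\ell}$, but for the LDP one needs the stronger \emph{exponential equivalence} $\limsup_N \tfrac1N \log \PP(\cW_2(L^N(\underline X^N), L^N(\underline{\tilde X}^N))>\delta) = -\infty$ for every $\delta>0$.

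For the decoupled system itself, the drift $\overline B$ is bounded and Lipschitz in $(x,\mu)$ (thanks to \ref{a6} and the Lipschitz regularity of $V$), so $L^N(\underline{\tilde X}^N)$ satisfies a Sanov-type LDP on $\cP(\cC^\ell)$ with rate function $\cI$ given by \eqref{eq:rate.X}. I would obtain this via the weak-convergence/variational route of Budhiraja--Dupuis--Fischer: starting from the Bou\'e--Dupuis representation of $-\tfrac1N\log\EE[\exp(-N F(L^N(\underline{\tilde X}^N)))]$ in terms of controls in $\cU$, one establishes tightness of the controlled empirical measures and identifies the Laplace limit using Lipschitz stability of the McKean--Vlasov SDE and the uniqueness of its solution. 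Combined with the exponential equivalence above, this yields (i). For (ii), note that $\hat\alpha^{i,N}_t$ is exponentially close to $\Lambda(t, X^{i,N}_t, V(t, X^{i,N}_t, L^N(\underline X^N_t)), L^N(\underline X^N_t))$, so $L^N(\hat{\underline\alpha}^N_t)$ is exponentially close to $\Psi(t, L^N(\underline X^N_t))$; under the added weak continuity of $\Lambda$ and $V$, the map $\theta \mapsto \Psi(t, \theta_t)$ is continuous on $\cP(\cC^\ell)$, and the contraction principle delivers the rate function $\widetilde{\cI}_t$.

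The main obstacle is the exponential equivalence step: the $\mathbb{L}^2$-bounds available from \cite{pontryagin} give only polynomial-in-$N$ control, and one needs sub-Gaussian tail bounds of the type $\exp(-cN\delta^2)$. I expect this to follow from two ingredients acting in tandem: pathwise Lipschitz-in-$\omega$ estimates derived from the small-$T$ FBSDE contraction applied conditionally on the Brownian increments, which propagate the Lipschitz constant of $V$ to a Lipschitz constant of $X^{i,N}-\tilde X^{i,N}$ as a functional of the Brownian paths; and concentration inequalities of Bolley--Guillin--Villani type for empirical measures of i.i.d.\ Gaussian diffusions, used to control the residual error $Y^{i,N}_t - V(t, X^{i,N}_t, L^N(\underline X^N_t))$. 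Verifying that these combine to give exponential tightness of $L^N(\underline X^N)-L^N(\underline{\tilde X}^N)$ in the $\cW_2$-topology is the core technical work.
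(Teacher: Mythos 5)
Your overall roadmap --- decoupling the equilibrium FBSDE via the master solution $V$, introducing the auxiliary weakly interacting SDE driven by $\overline B$, invoking the Budhiraja--Dupuis--Fischer variational LDP for that uncontrolled system, establishing exponential equivalence, and finishing part (ii) with the contraction principle via $\Psi$ --- is exactly the paper's strategy. However, your treatment of the crucial exponential-equivalence step diverges from the paper's argument and is both unresolved and substantially more complicated than necessary.

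You propose to obtain exponential equivalence by combining pathwise Lipschitz-in-$\omega$ estimates (conditional on the Brownian increments) with concentration inequalities of Bolley--Guillin--Villani type for the residual $Y^{i,N}_t - V(t, X^{i,N}_t, L^N(\underline X^N_t))$. The paper needs neither ingredient. The key is Lemma \ref{lem:bound_Y}, whose input is Assumption \ref{a65} (boundedness of $\partial_x g$, $\partial_\mu g$, $\partial_x f$, $\partial_\mu f$). Taking conditional expectations in the adjoint system \eqref{eq:Y.ij} and applying Gronwall gives $\sup_N \big\|\sum_j |Y^{i,j}|\big\|_{\mathcal{S}^\infty} < \infty$, which forces the error processes $\varepsilon^{i,N}$, $\gamma^{i,N}$, $\zeta^{i,N}$ to satisfy $|\varepsilon^{i,N}| + |\gamma^{i,N}| + |\zeta^{i,N}| \le C/N$ \emph{pathwise}, not merely in $L^2$. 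Feeding these deterministic $O(1/N)$ bounds into the small-$T$ FBSDE contraction then yields the uniform-in-$N$ bound
\begin{equation*}
\sup_N \Big\|\sum_{i=1}^N |X^{i,\underline{\hat\alpha}} - \cX^{i,N}|\Big\|_{\mathcal{S}^\infty} + \sup_N \Big\|\sum_{i=1}^N |Y^{i,i} - V(\cdot,\cX^{i,N}_\cdot, L^N(\underline\cX_\cdot))|\Big\|_{\mathcal{S}^\infty} < \infty,
\end{equation*}
whence $\cW_2(L^N(\underline X^{\underline{\hat\alpha}}_t), L^N(\underline\cX_t)) \le \big(\tfrac1N\sum_i |X^{i,\underline{\hat\alpha}}_t - \cX^{i,N}_t|^2\big)^{1/2} \le C N^{-1/2}$ almost surely, uniformly in $t$. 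The event $\{\cW_2 > \delta\}$ then has super-exponentially small probability by elementary exponential Chebyshev (and is in fact eventually empty), so no concentration-of-measure machinery enters at all. You also drop the $\zeta^{i,N}_t$ argument from the representation $\hat\alpha^{i,N}_t = \widehat\Lambda(t, X^{i,\underline{\hat\alpha}}_t, Y^{i,i}_t, L^N(\underline X_t), \zeta^{i,N}_t)$; this term encodes the control interaction and is precisely one of the quantities that Lemma \ref{lem:bound_Y} must bound pathwise, so it cannot be elided if part (ii) is to go through.
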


In summary, Theorem \ref{thm:LDP} tells us that when the coefficients $b,f$ and $g$ of the game are sufficiently regular, the Hamiltonian satisfies a certain convexity condition and the master equation corresponding to the mean field game admits a Lipschitz--continuous solution, then, not only that any sequence of Nash equilibria converges to a mean field equilibrium, but in addition the sequence of Nash equilibria satisfies the LDP.
The condition pertaining to existence and Lipschitz--continuity of the solution of the master equation \eqref{eq:master pde} (i.e. \ref{a6}) is the only hard-to-check condition we impose here.
That being said, conditions guarantying existence and uniqueness of the master equation are given by \citet{ChassagneuxCrisanDelarue_Master} and \citet{carda15}.
These authors study for instance the equation arising from a mean field game (albeit without interaction through the controls) in \cite[Section 5]{ChassagneuxCrisanDelarue_Master}.
In section \ref{sec:example} below we discuss an example for which the equation is known (from the work \cite{ChassagneuxCrisanDelarue_Master}) to have a Lipschitz--continuous solution.
Let us also refer to \cite{Gang-Mes-Mou-Zha,Card-Soug20,Mou-Zhan,Bay-Cecc-Coh-Del} for more recent results on the existence of the master equation.
Moreover, let us observe that \cite{Del-Lac-Ram_Concent} make similar assumptions, to ours, and additionally assume the Hamilton--Jaboci--Bellman system characterizing the $N$--player game to have well--behaved (classical) solutions.
Finally, observe that obtained here is very similar to the rate function given in terms of weak solutions of McKean--Vlasov equations first derived in \cite{Bud-Dup-Fish}.

The limitation in the above theorem is to assume $T$ small enough, a condition which is needed to guarantee some FBSDE estimations.
We can get around the smallness condition by imposing additional monotonicity--type conditions on the parameters.
In fact, consider the following condition:
\begin{enumerate}[label = (\textsc{A8}), leftmargin = 30pt]
	\item With the function $\Lambda$ defined in \ref{a6}, the drift $b$ satisfies the monotonicity condition
	\begin{equation}
	\label{eq:mon.con.b}
			(x-x')\cdot\Big(b(t,x,\Lambda(t, x, y, \mu),\xi)-b(t,x',\Lambda(t, x', y, \mu),\xi) \Big)\le -K_b|x-x'|^2
	\end{equation}
	and the functions $b,H$ and $g$ satisfy
	\begin{equation}
	\label{eq:mon.con.b.H.g}
		\begin{cases}
			(y-y')\cdot\Big( b(t,x,\Lambda(t, x, y, \mu),\xi)-b(t,x,\Lambda(t, x, y',\mu),\xi) \Big) \le -K|y-y'|^2\\
			(x - x')\cdot\Big( \partial_xH(t,x',\Lambda(t, x', y, \mu),\xi)-\partial_xH(t,x,\Lambda(t, x, y,\mu),\xi) \Big) \le -K|x-x'|^2\\
			(x -x')\cdot\Big(\partial_xg(x, \mu) - \partial_xg(x',\mu)\Big)\ge K|x-x'|
		\end{cases}
	\end{equation}
	for all $t \in [0,T]$, $x,x', y, y' \in \RR^\xdim$, $a \in \mathbb{A}$ and $\xi \in \cP_2(\RR^{\xdim\times m})$, and for some constants $K,K_b>0$.
	\label{a8}
\end{enumerate}
Assuming monotonicity of $\Lambda$ is not an abstract condition.
In many cases (e.g.\ in the linear quadratic case) $\Lambda$ is a linear function of $y$.
Note moreover that by \eqref{eq:strong convex} it is easily checked that the function $\Lambda$ is Lipschitz--continuous (see e.g. \cite{pontryagin}).
We denote by $L_\Lambda$ the Lipschitz constant of $\Lambda$.
 We will further distinguish the Lipschitz constant of $b$ in each of its arguments. 
 Thus, we denote by $L_{b,x},L_{b,a},L_{b,\xi}$ the Lipschitz constant of $b$ in the variables $x, a,\xi$, respectively.
Under the above additional assumption, we have the following LDP:
\begin{theorem}
\label{thm:LDP-Tlarge}
	If the conditions \ref{a1}-\ref{a8} are satisfied and the $N$-player game admits a Nash equilibrium $\underline{\hat\alpha}$, then, for arbitrarily large $T>0$, there is a constant $c(T,L_{b,a},L_{b,\xi},L_\Lambda)>0$ depending only on $T,L_{b,a},L_{b,\xi}$ and $L_\Lambda$ such that if $K_b> c(T,L_{b,a},L_{b,\xi},L_\Lambda)$ then the conclusions $(i)$ and $(ii)$ of Theorem \ref{thm:LDP} hold.
\end{theorem}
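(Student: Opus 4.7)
The plan is to mimic the proof of Theorem \ref{thm:LDP} step by step, replacing every place where the smallness of $T$ was invoked by an argument that exploits the monotonicity assumption \ref{a8}. Once uniform-in-$T$ Lipschitz estimates on the decoupling field $V$ of the associated McKean--Vlasov FBSDE are available, the weak-convergence/variational machinery (Bou\'e--Dupuis-type representation followed by tightness and identification of the limit) carries over verbatim, yielding both conclusions $(i)$ and $(ii)$.

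First I would recall the FBSDE characterization of the Nash equilibrium. By the Pontryagin principle (using \ref{a3} to guarantee that $\Lambda$ is well-defined) a Nash equilibrium $\hat{\underline\alpha}$ is characterized through an $N$-dimensional coupled McKean--Vlasov FBSDE for $(X^{i,\underline{\hat\alpha}}, Y^i)$ with driver $F$, terminal condition $G$ and forward drift $B$ as in \ref{a6}. The mean field limit of this system is the McKean--Vlasov FBSDE whose decoupling field is precisely the solution $V$ of the master equation \eqref{eq:master pde}, and the optimal feedback is $\Lambda(t,x,V(t,x,\mu),\mu)$. The Nash control can therefore be written, via $V$, as a small perturbation of a feedback function evaluated along the $N$-particle states, and the whole LDP analysis reduces to an LDP for the controlled interacting diffusion driven by $\overline B$ in \eqref{eq:def.barB}, exactly as in Theorem \ref{thm:LDP}.

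The main step, and the main obstacle, is to establish Lipschitz regularity of $V$ in $(x,\mu)$ uniformly on $[0,T]$ for arbitrary $T$, using only \ref{a8} and \textbf{not} the smallness of $T$. For this I would linearize the mean field FBSDE in the spatial variable and in the measure variable (following the continuation-in-parameter/monotonicity methodology of Peng--Wu and its McKean--Vlasov extensions) and derive the a priori estimate
\begin{equation*}
 \EE\bigl[|\delta Y_0|^2\bigr] + K\,\EE\!\int_0^T\!\bigl(|\delta X_t|^2+|\delta Y_t|^2\bigr)dt\le C\,\EE\bigl[|\delta X_0|^2\bigr],
\end{equation*}
where $(\delta X,\delta Y)$ denotes the difference of two solutions with different initial data/measure flows. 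The three inequalities in \eqref{eq:mon.con.b.H.g} provide the dissipativity of the backward component (through $\partial_xH$ and $\partial_xg$) and the stability of the $Y$-to-$\Lambda$-to-$b$ chain, while \eqref{eq:mon.con.b} with $K_b>c(L_b,L_\Lambda)$ dominates the quadratic cross-terms generated by $L_b$ and $L_\Lambda$ in the It\^o expansion of $\langle \delta X,\delta Y\rangle$. This gives the Lipschitz constant of $x\mapsto V(t,x,\mu)$, and the same computation performed on the linearized-in-measure FBSDE (after differentiating in $\mu$ in the $L$-sense, using \ref{a4} and \ref{a6}) gives the Lipschitz constant of $\mu\mapsto V(t,\cdot,\mu)$ in $\mathcal{W}_2$. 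Working out the precise threshold $c(L_b,L_\Lambda)$ is the only technically delicate point.

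Once the uniform Lipschitz bounds on $V$, hence on $\overline B$ and on $\Psi$, are established, I would run the proof of Theorem \ref{thm:LDP} without modification. Namely: write $-\frac1N\log\EE[\exp(-NF(L^N(\underline X^{\underline{\hat\alpha}})))]$ via the variational representation as an infimum over progressively measurable drifts $(u^{i,N})$ of a controlled interacting SDE with drift $\overline B(t,\cdot,L^N)+\sigma u^{i,N}$; derive uniform $\mathbb{L}^2$ bounds on near-optimizers; prove tightness of the joint empirical laws of $(X^{i,N},u^{i,N})$ on $\mathcal{C}^\ell\times\mathbb{L}^2_{\mathrm{weak}}$; identify any limit point as a solution of the McKean--Vlasov equation $dX^u_t=\overline B(t,X^u_t,\mathcal{L}(X^u_t))dt+\sigma u_t\,dt+\sigma dW_t$, using Lipschitz continuity of $\overline B$ to pass to the limit in the drift; and match the lower and upper bounds through $\mathcal{I}$ in \eqref{eq:rate.X}. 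Part $(ii)$ follows by the contraction principle applied to the continuous map $\theta\mapsto\Psi(t,\theta_t)$, whose continuity is exactly the hypothesis added in item $(ii)$ of Theorem \ref{thm:LDP}.
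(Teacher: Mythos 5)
There is a genuine gap in your plan, and it stems from a misreading of where the small-$T$ assumption is actually used in the proof of Theorem~\ref{thm:LDP}. You propose, as the main step, to \emph{prove} that the solution $V$ of the master equation is Lipschitz in $(x,\mu)$ uniformly on $[0,T]$ for arbitrary $T$, using the monotonicity in \ref{a8}. But this is exactly what \ref{a6} \emph{assumes} --- and \ref{a6} remains a standing hypothesis in Theorem~\ref{thm:LDP-Tlarge} (the statement says ``conditions \ref{a1}-\ref{a8}''). So you are reproving a hypothesis rather than filling the gap created by dropping the small-$T$ restriction.

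The actual places where $T\le c(L_b,L_f,L_g)$ enters the proof of Theorem~\ref{thm:LDP} are (a) the well-posedness of the coupled FBSDE systems \eqref{eq:aux.fbsde} and \eqref{eq:MckV fbsde}, invoked through the contraction-type existence theorem \cite[Theorem 4.2]{MR3752669}, and (b) the exponential-closeness estimate \eqref{eq:bound.X.Y} in Lemma~\ref{lem:bound_Y}, which is obtained via Gronwall with a constant $\delta(\varepsilon,\eta)$ that blows up exponentially in $T$. The monotonicity condition \ref{a8} is what allows one to (a$'$) replace the short-time contraction argument by the Peng--Wu / Bensoussan--Yam--Zhang monotonicity framework for coupled FBSDEs, and (b$'$) make $\delta(\varepsilon,\eta)$ nonpositive by choosing $K_b$ large enough (larger than the explicit combination $c(L_b,L_\Lambda)$ appearing in the display defining $\delta(\varepsilon,\eta)$), so that the Gronwall bound no longer deteriorates as $T\to\infty$ and condition \eqref{eq:cond.Tvseps} can be met by tuning $\varepsilon$ rather than $T$. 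Your a~priori estimate in the style $\EE[|\delta Y_0|^2]+K\EE\int_0^T(|\delta X|^2+|\delta Y|^2)\,dt\le C\EE[|\delta X_0|^2]$ is the right flavor of computation, but you apply it to the difference of two solutions of the limiting McKean--Vlasov FBSDE (i.e.\ to the regularity of $V$), whereas the paper needs it for the difference between the Nash system \eqref{eq:true.system.N} and the auxiliary decoupled particle system \eqref{eq:SDE standard}; that comparison carries the extra error terms $\varepsilon^{i,N},\gamma^{i,N},\zeta^{i,N}$ and is what produces the $O(N^{-1})$ bound underlying the Gaussian tail in the exponential-equivalence step. Finally, your last paragraph re-derives the LDP for $L^N(\underline X^{\underline{\hat\alpha}})$ directly by a Bou\'e--Dupuis variational argument; this is unnecessary and also not quite licit, since the Nash dynamics are not of the clean form $\overline B+\sigma u$ --- the paper instead cites the known LDP for the auxiliary system \eqref{eq:SDE standard} and transfers it by exponential equivalence, which is why Lemma~\ref{lem:bound_Y} is the crux.
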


The idea of the proofs is inspired from \cite{Del-Lac-Ram_Concent}, but the details and techniques are wholly different.
The proof starts by identifying a weakly interacting particle system for which the LDP is known, and that is exponentially close (see Definition \ref{def:expo.close} below) to the state processes, then use the closeness property to "transfer" the LDP to the state processes.
Since the state processes of the $N$ agents at equilibrium are characterized by a system of forward-backward SDEs, the identification of the suitable auxiliary particle system whose LDP is known uses the well-known technique of \emph{decoupling fields}.
However, the proper decoupling field turns out to be the solution (when it exists) of the \emph{master equation}.
The main difficulty lies in the proof of the exponential closeness property.
This is based on a priori bounds for systems of FBSDEs at least provided that the solution of the master equation is Lipschitz--continuous.
In addition to the fact that we consider games allowing control interaction and open-loop controls,
the essential difference with \cite{Del-Lac-Ram_Concent} is the assumptions made on the PDEs.
On the one hand, we do not make use of the $N$-player PDE, and on the other hand we require the solution of the master equation to be \emph{Lipschitz--continuous}; we do not make additional regularity assumptions pertaining to its second derivative.
This is an interesting by-product of the convergence method of \cite{pontryagin} which is based on Pontryagin's stochastic maximum principle.
The (very) rough intuition for this gain of smoothness is that in the present case, the FBSDE (resp. the master equation) allows to represent the control (or the "derivative of the value function").
In contrast, the Hamilton-Jacobi-Bellman equation used in \cite{Del-Lac-Ram_Concent} represents the value function itself. 

\section{ Laplace principle for non-cooperative games}
\label{sec:ldp.nash}

This section is mostly dedicated to the proof of Theorem \ref{thm:LDP}.
We will start by shortly recalling the main idea of the proof of \cite[Theorem 1]{pontryagin}.
This will prepare the terrain for us here for the proof of the main results of the present paper.
The proof starts with key representation results for the Nash equilibrium and the mean field equilibrium.
In fact, let $\hat\alpha^{i,N}$ be a Nash equilibrium.
It follows from \cite{pontryagin} that there is a Lipschitz--continuous function $\widehat\Lambda:[0,T]\times \RR^\ell\times \RR^\ell \times \cP_2(\RR^\ell)\times \RR \to \AA$ such that
\begin{equation}
\label{eq:N Nash}
	\hat\alpha_t^{i,N} = \widehat\Lambda(t,X^{i,\underline{\hat\alpha}}_t, Y^{i,i}_t, L^N(\uX_t^{\underline{\hat\alpha}}),\zeta^{i,N}_t),
\end{equation}
and $(Y^{i,j}, Z^{i,j,k})$ satisfies the system of adjoint equations
\begin{equation}
\label{eq:Y.ij}
	\begin{cases}
		dX^{i,\underline{\hat\alpha}}_t = b(t, X^{i, \underline{\hat\alpha}}_t,\hat\alpha_t^{i,N},L^N(\underline{X}^{\underline{\hat\alpha}}_t,\underline{\hat\alpha}_t) )\,dt +\sigma\,dW_t^i\\
		d Y^{i,j}_t = -\partial_{x^j}H^{N,i}(t, X^{i, \underline{\hat\alpha}}_t, \hat\alpha^{i,N}_t, \underline{Y}^{i,\cdot}_t)\,dt + \sum_{k=1}^N Z^{i,j,k}_t dW^{k}_t   \\
		X^{i,\underline{\hat\alpha}}_0=x,\quad		\hat\alpha^{i,N}_t = \widehat\Lambda\Big(t,X^{i,\underline {\hat\alpha}}_t, Y^{i,i}_t,L^{N}(\underline X^{ \underline{\hat\alpha}}_t), \zeta^{i,N}_t \Big),\quad Y^{i,j}_T = \partial_{x^j}g^{N,i}(\underline X^{i, \underline{\hat\alpha}}_T)
	\end{cases}
\end{equation}
with 
\begin{equation*}
	H^{N,i}(t, \underline{x}, \underline{\alpha}, \underline{y}) : = f(t, x^i, \alpha^i, L^N(\underline x, \underline\alpha)) + \sum_{j=1}^Nb(t, x^j, \alpha^j,L^N(\underline x, \underline \alpha))y^{i,j}\text{ and } g^{N,i}(\underline x) = g(x^i, L^N(\underline x))
\end{equation*}
and thus\footnote{As usual $\delta_{\{i=j\}}= 1$ if $i=j$ and  $\delta_{\{i=j\}}= 0$ if $i\neq j$.}
\begin{equation*}
	\begin{cases}
		\partial_{x^j}H^{N,i}(t, X^{i, \underline{\hat\alpha}}_t, \hat\alpha^{i,N}_t, \underline{Y}^{i,\cdot}_t) = \delta_{\{i=j\}}\partial_xf(t, X^{i, \underline{\hat\alpha}}_t,\hat\alpha_t^{i,N},L^N(\underline{X}^{\underline{\hat\alpha}}_t,\underline{\hat\alpha}_t) ) + \partial_xb(t, X^{i, \underline{\hat\alpha}}_t,\hat\alpha_t^{i,N},L^N(\underline{X}^{\underline{\hat\alpha}}_t,\underline{\hat\alpha}_t) )Y^{i,j} + \varepsilon^{i,N}\\
		\partial_{x^j}g^{N,i}(\underline{X}_T^{i,\underline{\hat\alpha}}) = \delta_{\{i=j\}}\partial_xg(X^{i,\underline{\hat\alpha}}, L^N(\underline{X}_T^{i,\underline{\hat\alpha}})) + \gamma^{i,N}
	\end{cases}
\end{equation*}
where
\begin{equation*}
	\begin{cases}
		\varepsilon^{i,N}_t := \frac1N\partial_\mu f(t, X^{i, \underline{\hat\alpha}}_t, \hat\alpha^{i,N}_t, L^N(\underline{X}^{\underline{\hat\alpha}}_t,\underline{\hat\alpha}_t ))(X^{i, \underline{\hat\alpha}}_t) + \frac1N\sum_{j=1}^N\partial_\mu b(t, X^{j, \underline{\hat\alpha}}_t, \hat\alpha^{j,N}_t, L^N(\underline{X}^{\underline{\hat\alpha}}_t,\underline{\hat\alpha}_t ))(X^{i, \underline{\hat\alpha}}_t)Y^{i,j}_t\\
		\zeta^{i,N}_t := -\frac1N\partial_\nu f_2(t,X_t^{i,\underline{\hat\alpha}}, L^{N}(\underline X^{\underline{\hat\alpha}}_t,\underline{\hat\alpha}_t) )(\hat\alpha^{i,N}_t) -\frac1N\sum_{k=1}^N\partial_\nu b_2(t,X_t^{i,\underline{\hat\alpha}}, L^{N}(\underline X^{\underline{\hat\alpha}}_t,\underline{\hat\alpha}_t) )(\hat\alpha^{k,N}_t)Y^{i,k}_t\\
		\gamma^{i,N} := \frac1N\partial_\mu g(X^{i, \underline{\hat\alpha}}_T, L^N(\underline{X}^{\underline{\hat\alpha}}_T))(X^{i, \underline{\hat\alpha}}_T).
	\end{cases}
\end{equation*}
We will particularly be interested in the diagonal term $(X^{i,\underline{\hat\alpha}},Y^{i,i}, Z^{i,i,k})$ which, by a quick verification, can be shown to satisfy
\begin{equation}
\label{eq:true.system.N}
	\begin{cases}
		dX^{i,\underline{\hat\alpha}}_t = b(t, X^{i, \underline{\hat\alpha}}_t,{\hat\alpha_t^{i,N}},L^N(\underline{X}^{\underline{\hat\alpha}}_t,\underline{\hat\alpha}_t) )\,dt +\sigma\,dW_t^i\\
		d Y^{i,i}_t = -\Big\{\partial_xf(t, X^{i, \underline{\hat\alpha}}_t, \hat\alpha^{i,N}_t, L^N(\underline{X}^{\underline{\hat\alpha}}_t,\underline{\hat\alpha}_t )) + \partial_xb(t, X^{i, \underline{\hat\alpha}}_t, \hat\alpha^{i,N}_t, L^N(\underline{X}^{\underline{\hat\alpha}}_t,\underline{\hat\alpha}_t ))Y^{i,i}_t+ \varepsilon^{i,N}_t \Big\}\,dt + \sum_{k=1}^N Z^{i,i,k}_t dW^{k}_t   \\
		X^{i,\underline{\hat\alpha}}_0=x,\quad		\hat\alpha^{i,N}_t = \widehat\Lambda\Big(t,X^{i,\underline {\hat\alpha}}_t, Y^{i,i}_t,L^{N}(\underline X^{ \underline{\hat\alpha}}_t), \zeta^{i,N}_t \Big),\quad Y^{i,i}_T = \partial_xg(X^{i, \underline{\hat\alpha}}_T, L^N(\underline{X}^{\underline{\hat\alpha}}_T)) +\gamma^{i,N}.
	\end{cases}
\end{equation}
On the other hand, the limiting mean field equilibrium $\hat\alpha$ satisfies
\begin{equation}
\label{eq:mfe.rep}
	\hat\alpha^i_t  =  \widehat\Lambda(t,X^i_t, Y^i_t, \cL(X^i_t),0) \equiv \Lambda(t,X^i_t, Y^i_t, \cL(X^i_t))
\end{equation}
where\footnote{For ease of notation we omit the superscript $i$ and write $(X, Y, Z, W)$ instead of $(X^i, Y^i, Z^i,W^i)$; and $\hat\alpha$ instead of $\hat\alpha^i$.} $(X,Y,Z)$ solves the Mckean--Vlasov FBSDE
\begin{equation}
\label{eq:true.McKV.eq}
	\begin{cases}
		dX_t = b( X_t,\hat\alpha_t,\cL(X_t,\hat\alpha_t) )\,dt +\sigma\,dW_t\\
		d Y_t = -\Big\{\partial_xf(X_t, \hat\alpha_t, \cL(X_t,\hat\alpha_t )) + \partial_xb( X_t, \hat\alpha_t, \cL(X_t,\hat\alpha_t ))Y_t \Big\}\,dt +  Z_t dW_t   \\
		X_0=x,\quad		\hat\alpha_t=\Lambda\Big(t,X_t, Y_t,\cL( X_t) \Big),\quad Y_T = \partial_xg(X_T, \cL(X_T)).
	\end{cases}
\end{equation}
More precisely, it follows by propagation of chaos arguments that $(X^{i,\hat\alpha}, Y^{i,i})$ converges to $(X,Y)$ in $\mathcal{S}^2(\RR^\xdim\times \RR^\xdim)$, where, given a normed space $E$ and $p\ge0$, we denote by $\mathcal{S}^p(E)$ the space of adapted processes $X$ equipped with the norm
\begin{equation*}
	\|X\|_{\mathcal{S}^2(E)} := \EE\Big[\sup_{t \in [0,T]}\|X_t\|^p_E\Big].
\end{equation*}
With this preparation out of the way, we are now ready for the proofs of the main results of this article.
The next section focuses on the non-cooperative $N$-player game described in the introduction, and Section \ref{sec:ldp-mfc} will deal with the linear-quadratic case for cooperative games.

\subsection{LDP for small time horizons: Proof of Theorem \ref{thm:LDP}}

As announced in the previous section, the proof of Theorem \ref{thm:LDP} builds upon LDP for uncontrolled, interacting (forward) particle systems.
In order to exploit such results, we will introduce a forward-backward particle system that is "similar" to (but considerably more tractable than) \eqref{eq:true.system.N}.
Using this auxiliary particle system and well-known decoupling techniques from the theory of forward-backward SDEs allow to construct an uncontrolled forward particle system for which LDP results are well-known.
The last step of the proof is to show that the auxiliary particle system for which the LDP is known in the literature is "close enough" to our original particle system.
This is achieved by using a priori estimations for FBSDEs.
Here, close enough should be understood in the following sense put forth in \cite[Definition 4.2.10]{Dembo-Zeitouni}:
\begin{definition}
\label{def:expo.close}
	Let $(\mathcal{Y}, d)$ be a metric space.
	The probability measures $\mu_\varepsilon$ and $\tilde \mu_\varepsilon$ on $\mathcal Y$ are called exponentially equivalent if there exist probability spaces $(\Omega, \mathcal B_\varepsilon, P_\varepsilon)$ and two families of $\mathcal Y$-valued random variables $Z_\varepsilon$ and $\tilde Z_\varepsilon$ with joint laws $(P_\varepsilon)$ and marginals $\mu_\varepsilon$ and $\tilde \mu_\varepsilon$, respectively, such that the following condition is satisfied:

	For each $\delta>0$, the set $\{\omega: (\tilde Z_\varepsilon, Z_\varepsilon) \in \Gamma_\delta\}$ is $\mathcal B_\varepsilon$ measurable \footnote{The measurability requirement is satisfied whenever $\mathcal{Y}$ is a separable space, or whenever the laws $\{P_\varepsilon\}$ are induced by separable real-valued stochastic processes and $d$ is the supremum norm, see Remarks below \cite[Definition 4.2.10]{Dembo-Zeitouni}.}, and
	\begin{equation*}
		\limsup_{\varepsilon\to 0}\varepsilon\log P_\varepsilon(\Gamma_\delta) = - \infty,
	\end{equation*}
	where $\Gamma_\delta = \{(\tilde y, y): d(\tilde y, y) >\delta\} \subseteq \mathcal Y\times \mathcal Y$.
 \end{definition}

\begin{proof}[Proof of Theorem \ref{thm:LDP}]
	At equilibrium, the state process of player $i$ is given by the SDE
	\begin{equation*}
		dX^{i,\underline{\hat\alpha}}_t = b(t,X^{i,\underline{\hat\alpha}}_t, \hat\alpha^{i,N}_t, L^N(\uX_t^{\underline{\hat\alpha}}, \hat{\ua}_t))\,dt + \sigma\,dW^i_t
	\end{equation*}
	with $\hat\alpha^{i,N}$ given by \eqref{eq:N Nash}.
	Since $\Lambda$ is Lipschitz--continuous (this follows from \eqref{eq:strong convex}), it can be shown that $\varphi_t$ defined in \eqref{eq:def.phi} is Lipschitz continuous with respect to the second order Wasserstein distance, see for instance (the proof of) \cite[Theorem 1]{pontryagin}.
	Thus, the functions $B,F$ and $G$ introduced in \ref{a6} are Lipschitz--continuous and of linear growth.
	Further observe that using these functions, the McKean--Vlasov equation \eqref{eq:true.McKV.eq} characterizing the mean field equilibrium reads
	\begin{equation}
	\label{eq:MckV fbsde}
	 	\begin{cases}
	 		d X_t = B(t,  X_t, Y_t, \cL(X_t, Y_t))\,dt + \sigma\,dW_t\\
	 		dY_t = - F(t, X_t, {Y}_t, \cL( X_t, Y_t))\,dt +  Z_t\,dW_t\\
	 		X_0 =x, \quad  Y_T = G(X_T, \cL(X_T)).
	 	\end{cases}
	 \end{equation}

	We will now introduce two auxiliary interacting particle systems that will allow us to derive the LDP for the sequence of interest.
	First consider the equation
	\begin{equation}
	\label{eq:aux.fbsde}
	 	\begin{cases}
	 		d\widetilde X^{i,N}_t = B(t,\widetilde X^{i,N}_t, \widetilde Y^{i,N}_t, L^N(\widetilde {\underline{X}}_t, \widetilde{\underline Y}_t))\,dt + \sigma\,dW_t^i\\
	 		d\widetilde Y^{i,N}_t = - F(t,\widetilde X^{i,N}_t, \widetilde {Y}^{i,N}_t, L^N(\widetilde {\underline X}_t, \widetilde{\underline Y}_t))\,dt + \sum_{k=1}^N\widetilde Z^{i,k}_t\,dW_t^k\\
	 		X_0 = x, \quad  \widetilde Y_T^{i,N} = G(\widetilde{X}^{i,N}_T,L^N(\underline {\widetilde X}_T))
	 	\end{cases}
	 \end{equation}
	which simply corresponds to \eqref{eq:true.system.N} after taking "$\varepsilon^{i,N} = \gamma^{i,N}= \zeta^{i,N}=0$" for all $i$.
	Note that by Lipschitz-continuity of $B,F$ and $G$, it follows e.g. from \cite[Theorem 4.2]{MR3752669} that if $T$ is small enough, then Equation \eqref{eq:aux.fbsde} admits a unique solution.
		
	The second auxilliary equation is introduced through a decoupling argument.
	Since $V$ is a classical solution of the PDE \eqref{eq:master pde}, applying It\^o's formula to $\bar Y_t := V(t, X_t, \cL(X_t))$ shows that there is $\bar Z$ such that $(\bar Y, \bar Z)$ solves the backward equation in \eqref{eq:MckV fbsde}.
	Thus, by uniqueness, we have $Y_t = V(t, X_t, \cL(X_t))$.
	The function $V$ is often called a decoupling field for the system \eqref{eq:MckV fbsde} because it allows to write the system as two decoupled equations, where $X$ satisfies
	\begin{align}
	\notag
		dX_t &= B\Big(t,X_t, V(t, X_t, \cL(X_t)), \cL\big(X_t, V(t, X_t, \cL(X_t)))\big) \Big)\,dt + \sigma\,dW_t\\
	\label{eq:mkv sde}
		 &= \overline B(t,X_t, \cL(X_t))\,dt + \sigma\,dW_t 
	\end{align}
	where $\overline B$ is the function defined in \eqref{eq:def.barB}.
	Since $V(t, \cdot,\cdot)$ is Lipschitz--continuous on $\RR^\xdim \times \cP_2(\RR^\xdim)$ uniformly in $t\in [0,T]$, it follows that  $\overline B$ is Lipschitz continuous on $\RR^\xdim\times\cP_2(\RR^\xdim)$ as well.
	The second auxiliary particle system is then
	\begin{equation}
	\label{eq:SDE standard}
		d\cX^{i,N}_t = \overline B(t,\cX^{i,N}_t, L^N(\underline\cX_t))\,dt + \sigma\,dW^i_t, \quad  \cX^{i,N}_0 =x.
	\end{equation}
	Note that $(\cX^{i,N})_{i=1,\dots,N}$ is well-defined by classical SDE theory.
	Moreover, the standard theory of propagation of chaos (see e.g.\ \cite{MR1108185}) shows that the sequence $\cX^{i,N}$ converges in $\mathcal{S}^2(\RR^\xdim)$ to $X$ and the sequence of empirical measures $L^N(\underline{\cX}_t)$ converges to $\cL(X_t)$ in $\cP_2(\RR^\xdim)$, see e.g. \cite[Theorem 2.12]{MR3753660}.
	Furthermore, again by Lipschitz--continuity of $\overline B$, it follows by \cite[Theorem 5.2]{Fischer14} (see also \cite[Theorem 3.1]{Bud-Dup-Fish}) that the family of empirical measures $(L^N(\underline\cX))_N$ satisfies the Laplace principle (in $\cP(\cC^\xdim)$) with rate function given by \eqref{eq:rate.X}.
	\cite[Theorem 5.2]{Fischer14} shows that this rate function is lower semicontinuous for the weak topology, and therefore the sublevel sets are weakly closed.
	Let us show that the rate function is good, i.e. that it has weakly compact sublevel sets.
	By Prokhorov's theorem, if we show that the sublevel sets are tight it will follow that they are weakly relatively compact.
	Let $\mathbb{Q}^n$ be a probability measures on (the Polish space) $\cC^d$ such that $\mathcal{I}(\mathbb{Q}^n)\le K$ for some constant $K>0$.
	Then there is a sequence $(u^n)$ in $\mathcal{H}^2(\RR^d)$, (the space of $\RR^d$--valued, square integrable and progressive processes) such that
	\begin{equation*}
		\EE\bigg[\frac12\int_0^T|u^n_t|^2\,dt\bigg]< K + 1/n\quad \text{and}\quad \cL(X^{u^n}) = \mathbb{Q}^n.
	\end{equation*}
	Thus, it suffices to show that the sequence $\cL(X^{u^n})$ is tight.
	Observe that by boundedness of the sequence $(u^n)$ in $\mathcal{H}^2(\RR^d)$ and linear growth of $\overline B$ it follows by standard SDE estimations that $\sup_n\sup_t\EE[|X^{u^n}_t|^2]<\infty$.
	Furthermore, for every $0\le s\le t\le T$, we have
	\begin{align*}
		|X^{u^n}_t - X^{u^n}_s| \le C\int_s^t1 + |X^{u^n}_r| + \EE[|X^{u^n}_r|^2]^{1/2} + |u^n_r|\,dr + |\sigma||W_t - W_s|.
	\end{align*}
	Hence, taking expectation on both sides and applying Cauchy-Schwarz inequality yields
	\begin{align*}
		\EE|X^{u^n}_t - X^{u^n}_s| &\le C|t-s|^{1/2}\bigg(1 + \EE\bigg[\int_0^T|u^n_r|^2\,dr\bigg]^{1/2} + \sup_r\EE\big[|X^{u^n}_r|^2\big]^{1/2} + |\sigma|\bigg)\\
		&\le C|t-s|^{1/2}.
	\end{align*}
	Thus, since $(X^{u^n})$ is a sequence of continuous processes, it follows by Kolmogorov's tightness criterion that $\mathbb{Q}^n$ is tight.
	Therefore, $\mathcal{I}$ is a good rate function.

	In order to "transfer" the LDP from the sequence $(L^N(\underline{\cX}))$ to the relevant sequence $(L^N(\underline{X}^{\underline{\hat\alpha}}))$, we need to show that the two sequences are exponentially close in the sense of Definition \ref{def:expo.close}.
	This follows from Chebyshev's inequality and Lemma \ref{lem:bound_Y} below since we have
	 \begin{align*}
	 	\PP\Big(\sup_{t\in [0,T]}\cW_2(L^N(\uX_t^{\underline{\hat\alpha}}), L^N(\underline\cX_t)) >\varepsilon\Big) &\le \PP\Big( \Big\{\frac1N\sup_{t\in [0,T]}\sum_{i=1}^N|X^{\underline{\alpha},i}_t -  \cX^{i}_t|^2\Big\}^{1/2} \ge \varepsilon \Big)\\
	 	&\le \EE\Big[\exp\Big\{ \Big(\sum_{i=1}^N\|X^{\underline{\alpha},i} -  \cX^{i}\|^2_\infty \Big)\Big\} \Big]e^{-\varepsilon^2N^2}\\
	 	&\le C e^{-\varepsilon^2 N^2}. 
	\end{align*}

	Therefore, 	 	
	\begin{equation*}
	 	\lim_{N\to \infty}\frac1N\log \PP\Big(\sup_{t\in [0,T]}\cW_2(L^N(\uX_t^{\underline{\hat\alpha}}), L^N(\underline\cX_t)) >\varepsilon \Big) = - \infty.
	 \end{equation*}
	It then follows from \cite[Theorem 4.2.13]{Dembo-Zeitouni} that the sequence $L^N(\uX^{\underline{\hat\alpha}})$ satisfies the LDP with rate function $\mathcal{I}$.

\vspace{.2cm}

	Let us now turn to the large deviation principle for the $N$-Nash equilibrium $(\hat\alpha^{i,N})_{i=1,\dots,N}$.
	The difficulty here is the fact that $\hat\alpha^{i,N}$ is not a function of $X^{i,\underline{\hat\alpha}}$ and $L^N(\underline{X}^{\underline{\hat\alpha}})$ only, it also depends on the process $\zeta_t^{i,N}$, see \eqref{eq:true.system.N}.
	Nevertheless, using the contraction principle, we will again prove the LDP for an auxiliary sequence that is exponentially equivalent to $L^N(\underline{\hat\alpha})$.
	Define the auxiliary process
	\begin{equation*}
		\alpha^{i,N}_t := \Lambda\Big(t,\cX^{i,N}_t, V\big(t,\cX^{i,N}_t, L^N(\underline{\cX}_t)\big),L^N(\underline{ \cX}_t) \Big).
	\end{equation*}
	It follows by continuity of $\Lambda$ and $V$, the convergence of $(\cX^{i,N})_{N\ge1}$ to $X$ and the representation \eqref{eq:mfe.rep} of the mean field equilibrium $\hat\alpha$ that\footnote{Recall that to simplify notation we write $\hat\alpha$ instead of $\hat\alpha^i$.}
	\begin{equation*}
		 \alpha^{i,N}_{t} \to \hat\alpha_t \quad \text{in}\quad \mathbb{L}^2.
	\end{equation*}
	We will now use the contraction principle to show that $(L^N(\underline{\alpha}_t))_{N\ge1}$ satisfies the LDP.
	Consider the function $\Psi$ mapping $\cP(\RR^\xdim)$ to $\cP(\RR^\xdim)$ and defined as
	\begin{equation}
		\Psi( t,\mu) := \mu \circ \Lambda\big(t, \cdot, V(t, \cdot, \mu),\mu \big)^{-1} .
	\end{equation}
	The function $\Psi$ is continuous.
	In fact, given a sequence $(\mu^n)_{n\ge1}$ converging to $\mu$ in the weak topology, and a bounded, Lipschitz-continuous test function $f$, we have
	\begin{align*}
		\int_{\RR^\xdim}f(x)\Psi(t,\mu^n)(dx) &= \int_{\RR^\xdim}f\Big(\Lambda(t, x, V(t, x, \mu^n),\mu^n)\Big)\mu^n(dx). 
	\end{align*}
	Since $\Lambda$ and $V$ are continuous on $\cP(\RR^\xdim)$, the functions $f^n(x):=f\Big(\Lambda(t, x, V(t, x, \mu^n),\mu^n)\Big)$ define a sequence of bounded continuous functions converging to $f\Big(\Lambda(t, x, V(t, x, \mu),\mu)\Big)$ pointwise.
	Since $f^n$ is uniformly bounded and uniformly Lipschitz, it follows that
		\begin{align*}
			\int_{\RR^\xdim}f(x)\Psi(t,\mu^n)(dx)&= \int_{\RR^\xdim}f^n(x)\mu^n(dx) \to \int_{\RR^\xdim}f\Big(\Lambda(t, x, V(t, x, \mu),\mu)\Big)\mu(dx),
		\end{align*}
	showing that $\Psi$ is continuous.
	Now, by the definition of $\Psi$, we have
	$$L^N(\underline{\alpha}_t) = \Psi(t, L^N(\underline{\cX}_t))$$
	and
	similarly, since the mean field equilibrium $\hat\alpha$ satisfies the representation $\hat\alpha_t =\Lambda(t,X_t, Y_t, \cL(X_t)) = \Lambda\big(t, X_t, V(t, X_t, \cL(X_t)), \cL(X_t) \big)$, it follows that $$\cL(\alpha_t) = \Psi(t, \cL(X_t)).$$
	Using the fact that the sequence $L^N(\underline {\cX}_t)$ satisfies the LDP with rate function
	\begin{equation*}
	 	\mathcal{I}_t(\mu) := \inf_{\theta \in \cP(\cC^\ell): \theta_t = \mu}\cI(\theta)
	 \end{equation*} and that $\Psi$ is continuous, it follows by the contraction principle that $L^N(\ua_t)$ satisfies the LDP as well.
	In fact, for every bounded continuous function $F:\cP(\mathbb{R}^m) \to \mathbb{R}$, the function $F\circ \Psi(t,\cdot)$ is again bounded continuous and therefore we have
	\begin{align*}
	 	-\frac1N\log \EE\Big[\exp(-NF(L^N(\underline{\alpha}_t)))\Big] &= -\frac1N\log \EE\Big[\exp\big(-NF\circ\Psi(t,L^N(\underline{\cX}_t))\big)\Big]\\
	 		 &\to \inf_{\mu\in \cP(\mathbb{R}^\ell)}(F\circ\Psi(t,\mu) + \mathcal{I}_t(\mu)) \\
	 		 & = \inf_{\nu\in \cP(\mathbb{R}^m)}(F(\nu) + \widetilde {\mathcal{I}}(\nu))
	 \end{align*}
	 with
	\begin{equation*}
		\widetilde {\mathcal{I}}(\nu) :=\inf_{\theta \in \cP(\mathcal{C}^\ell): \Psi(t,\theta_t) = \nu}\mathcal{I}(\mu).
	\end{equation*} 

	Similar to the proof of the LDP for $L^N(\uX^{\underline{\hat\alpha}})$, it remains to show that the sequences $(L^N(\underline{ \alpha}_t))_N$ and $(L^N(\underline{\hat\alpha}_t))_N$ are exponentially close and that the rate function $\mathcal{\widetilde I}$ has compact sublevel sets.
	The latter property follows from the fact that for every $x\ge 0$ it holds $\{ \nu\in\cP(\mathbb{R}^m): \mathcal{\widetilde I}(\nu)\le x\} = \Psi\big(t,\{ \theta_t: \theta\in \cP_2(\cC^\ell),\,\,  \mathcal{I}(\theta)\le x\} \big) $ and $\Psi(t,\cdot)$ is a continuous function for the weak topology.
	Let us now show exponential closeness.
	To this end, we introduce the function $\Phi$ mapping $\cP(\RR^\xdim\times \RR^\xdim)\times \cP(\RR^m)$ to $\cP(\RR^m)$ and defined as
	\begin{equation}
		\Phi( t,\xi,\mu) := \xi\otimes\mu \circ \widehat\Lambda\big(t, \cdot,\cdot ,\xi^1,\cdot \big)^{-1} 
	\end{equation}
	where $\xi^1$ is the first marginal of $\xi$. 
	That is, for a Borel set $U \subseteq \RR^m$, we have $\Phi(t, \xi, \mu)(U) = \xi\otimes \mu(\{(x,y,z): \widehat \Lambda(t, x, y, \xi^1, z)\in U\})$. 
	Recall that we put $\Lambda(t,x,y,\mu) := \widehat\Lambda(t,x,y,\mu,0)$.	
	Now, Put
	\begin{equation*}
	 	\cY_t^{i,N} := V(t, \cX_t^{i,N}, L^N(\underline{\cX}_t) ).
	 \end{equation*} 
	 By the definition of $\Phi$, and $\Phi^N$ we have
	$$L^N(\underline{\alpha}_t) = \Phi(t, L^N(\underline{\cX}_t, \underline{\cY}_t),\delta_0) \quad \text{and} \quad L^N(\underline{\hat\alpha}_t) = \Phi(t, L^N(\underline{X}^{\underline{\hat\alpha}}_t, \underline{Y}_t),L^N(\underline{\zeta}_t)) $$ 
	with $\underline{\zeta}_t:= (\zeta^{1,N}_t, \dots ,\zeta^{N,N}_t)$.
	show that the function $\Phi$ is is Lipschitz continuous with respect to the $2$-Wasserstein topology.
	By Kantorovich's duality theorem, see \cite[Theorem 5.10]{Vil2}, for every $\xi,\xi' \in \mathcal{P}_2(\mathbb{R}^\xdim\times \RR^\xdim)$ we have
	\begin{align*}
		\cW^2_2(\Phi(t,\xi,\mu), \Phi(t,\xi',\mu')) &= \sup\bigg(\int_{\RR^m} h_1(x)\Phi(t,\xi,\mu)(dx) - \int_{\RR^m} h_2(x')\Phi(t,\xi',\mu')(dx') \bigg)\\
		& = \sup\bigg(\int_{\RR^\xdim\times \RR^\xdim\times \RR^m} h_1(\widehat\Lambda(t,x,y, \xi^1,z))\xi(dx,dy)\mu(dz))\\
		&\qquad
	 - \int_{\RR^\xdim\times \RR^\xdim\times \RR^m} h_2(\widehat\Lambda(t, x', y',\xi^{1\prime}, z))\xi'(dx',dy')\mu'(dz) \bigg)
	\end{align*}
	with the supremum being taken over the set of bounded continuous functions $h_1,h_2:\RR^m\to \RR$ such that $h_1(x) -h_2(x') \le |x-x'|^2 $ for every $x, x'\in \mathbb{R}^m$ which, by Lipschitz continuity of $\widehat\Lambda$ implies that $h_1\Big(\widehat\Lambda(t, x, y, \xi^1,z) \Big) - h_2\Big( \widehat{\Lambda}(t, x',y', \xi^{1\prime}, z') \Big)\le C(|x -x'|^2 +|y-y'|^2 +\cW^2_2(\xi^1,\xi^{1\prime}) + |z-z'|^2)$ for some constant $C>0$.
	This shows that
	\begin{align*}
		\cW^2_2(\Phi(t,\xi,\mu), \Phi(t,\xi',\mu'))  &\le C\sup\bigg( \int_{\mathbb{R}^\xdim\times \RR^\xdim\times \RR^m} \tilde h_1(x,y,z)\xi(dx,dy)\mu(dz)\\
		&\qquad - \int_{\mathbb{R}^\xdim\times \RR^\xdim\times \RR^m} \tilde h_2(x',y',z')\xi'(dx',dy')\mu'(dz) \bigg)
		  +C\cW^2_2(\xi^1,\xi^{1\prime}) 
	\end{align*}
	with the supremum over functions $\tilde h_1,\tilde h_2$ such that $\tilde h_1(x,y,z) - \tilde h_2(x',y',z') \le |x-x'|^2 + |y- y'|^2 + |z - z'|^2$.
	Hence, applying Kantorovich duality once again yields
	\begin{align}
	\label{eq:psi.Lipschitz}
		\cW_2(\Phi(t,\xi,\mu), \Phi(t,\xi',\mu')) \le C\cW_2(\xi\otimes \mu, \xi'\otimes \mu')+ C\cW_2(\xi^1,\xi^{1\prime}). 
	\end{align}
	Thus, we have
	\begin{align*}
		&\PP\Big(\sup_{t\in [0,T]}\cW_2\big(L^N(\underline{\hat\alpha}_t), L^N(\underline{\alpha}_t) \big)> \varepsilon \Big)= \PP\Big(\sup_{t\in [0,T]}\cW_2\big(\Phi(t,L^N(\underline{X}^{\underline{\alpha}}_t,\underline{Y}_t),\delta_0),\Phi(t, L^N(\underline{\cX}_t, \underline{\cY}_t),L^N(\underline{\zeta}_t))\big) > \varepsilon \Big) \\
		&\le \PP\Big(\sup_{t\in [0,T]}\cW_2( L^N(\underline{X}^{\underline{\alpha}}_t,\underline{Y}_t)\otimes \delta_0, L^N(\underline{\cX}_t, \underline{\cY}_t))\otimes L^N(\underline{\zeta}_t) ) >\frac{\varepsilon}{2C} \Big) + \PP\Big(\sup_{t\in [0,T]}\cW_2( L^N(\underline{X}^{\underline{\alpha}}_t), L^N(\underline{\cX}_t))>\frac{\varepsilon}{2C} \Big)\\
		&\le  Ce^{-\varepsilon^2N^2} + \EE\Big[\exp\Big(\sum_{i=1}^N|\zeta^{i,N}|\Big)^2\Big]e^{-\varepsilon^2N^2C}\\
		&\le Ce^{-\varepsilon^2N^2}
	\end{align*}
	for some constant $C>0$, where the latter inequality follows by Lemma \ref{lem:bound_Y}.
	Therefore, it follows that	
	\begin{equation*}
	 	\lim_{N\to \infty}\frac1N\log \PP\Big(\sup_{t\in [0,T]}\cW_2(L^N(\underline{\hat\alpha}_t), L^N(\underline{\alpha_t})) >\varepsilon \Big) = - \infty.
	 \end{equation*}
	This concludes the proof.
\end{proof}
\begin{proof}[Proof of Theorem \ref{thm:LDP-Tlarge}]
	The proof of this theorem is almost the same as that of Theorem \ref{thm:LDP}, except for two points.
	First, to get well-posedness of the FBSDEs \eqref{eq:aux.fbsde} and \eqref{eq:MckV fbsde}, we rather use the results of \citet{Peng-Wu99} and \citet{Ben-Yam-Zhang15} respectively.
	Secondly, one should apply the second part of Lemma \ref{lem:bound_Y} below, (i.e.\ the case of $T$ arbitrary) to get the bound of $\sum_{i=1}^N\|X^{\underline{\alpha},i} -  \cX^{i}\|^2_\infty$.
\end{proof}
We finish the proof with the following key lemma used to derive the large deviation principles. 
We will denote by $\mathcal{S}^{\infty}(\mathbb{R}^l)$ the space of continuous adapted processes $X$ such that  
\begin{equation*}
	\|X\|_{\mathcal{S}^{\infty}(\mathbb{R}^l)}:= \sup_{0\leq t\leq T}|X_t|\in \mathbb{L}^{\infty}(\mathbb{R},\cF)<\infty
\end{equation*}
where $\mathbb{L}^\infty(\mathbb{R},\cF)$ is the space of essentially bounded $\cF$--measurable random variables with values in $\RR$.
\begin{lemma}\label{lem:bound_Y}
	If the assumptions \ref{a1}-\ref{a6} are satisfied, then it holds that
	\begin{equation}
	\label{eq:bound.sum}
		\sup_{N\ge1}\Big\|\sum_{j=1}^N|Y^{i,j}|\Big\|_{\mathcal{S}^\infty(\mathbb{R}^\ell)} <   \infty \quad \text{for all}\quad t \in [0,T]\quad \text{and for each } i\ge1
	\end{equation}
	and there is a constant $C>0$ such that
	\begin{equation}
	\label{eq:eps.zeta.gamma}
		|\varepsilon^{i,N}| + |\gamma^{i,N}| + |\zeta^{i,N}| \le C/N. 
	\end{equation}
	Moreover, there is $c>0$ such that if $T\le c$, then
	\begin{equation}
	\label{eq:bound.X.Y}
		\sup_{N\ge1}\Big\|\sum_{i=1}^N|X^{i,\underline{\hat{\alpha}}} - \cX^{i,N}|\Big\|_{\mathcal{S}^{\infty}(\mathbb{R}^\ell)} + \sup_{N\ge1}\Big\|\sum_{i=1}^N|Y^{i,i}-V(\cdot,\cX^{i,N}_\cdot,L^N(\underline\cX_\cdot))|\Big\|_{\mathcal{S}^{\infty}(\mathbb{R}^\ell)}< \infty.
	\end{equation}
	
	If the conditions \ref{a1}-\ref{a8} hold, then for abitrarily large $T>0$, there is a constant $c(T,L_{b,a},L_{b,\xi},L_\Lambda)$ such that if $K_b> c(T,L_{b,a},L_{b,\xi},L_\Lambda)$, then the bound \eqref{eq:bound.X.Y} holds.
\end{lemma}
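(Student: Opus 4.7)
The plan is to handle the three assertions in sequence, with \eqref{eq:bound.X.Y} carrying the real difficulty.

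For \eqref{eq:bound.sum}, the idea is to exploit the explicit form of the terminal condition and driver in the BSDE \eqref{eq:Y.ij}. Since $g^{N,i}(\underline x)=g(x^i,L^N(\underline x))$, direct differentiation gives $\sum_{j=1}^N |\partial_{x^j} g^{N,i}(\underline X_T)| \le \|\partial_x g\|_\infty + \|\partial_\mu g\|_\infty$, bounded by \ref{a65}. An analogous computation on $H^{N,i}$, combined with boundedness of $\partial_x b$ (from \ref{a1}), $\partial_\mu b$ (from \ref{a4}), and $\partial_x f$, $\partial_\mu f$ (from \ref{a65}), produces $\sum_j |\partial_{x^j} H^{N,i}|\le C\big(1 + \sum_j |Y^{i,j}|\big)$. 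Writing the BSDE as the conditional expectation
\[
Y^{i,j}_t=\EE\Big[\partial_{x^j}g^{N,i}(\underline X_T)+\int_t^T\partial_{x^j}H^{N,i}_s\,ds\,\Big|\,\cF_t\Big],
\]
summing $|Y^{i,j}_t|$ over $j$, and taking essential supremum in $\omega$, the function $\varphi(t):=\mathrm{ess\,sup}_\omega \sum_j|Y^{i,j}_t|$ satisfies $\varphi(t)\le C+C\int_t^T \varphi(s)\,ds$, and Gronwall closes the bound. The estimate \eqref{eq:eps.zeta.gamma} then follows from direct inspection of the definitions of $\varepsilon^{i,N}$, $\gamma^{i,N}$, $\zeta^{i,N}$: each term carries a $1/N$ prefactor, and the only non-trivial contributions are empirical averages of the form $\frac{1}{N}\sum_j|\partial_\mu(\cdot)|\,|Y^{i,j}|$, which by \eqref{eq:bound.sum} are $O(1/N)$.

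For \eqref{eq:bound.X.Y}, my plan is to introduce the decoupled process $\bar Y^{i,N}_t:=V(t,\cX^{i,N}_t,L^N(\underline{\cX}_t))$ and study the errors $\Delta X^i:=X^{i,\underline{\hat\alpha}}-\cX^{i,N}$ and $\Delta Y^i:=Y^{i,i}-\bar Y^{i,N}$. Applying the It\^o formula on Wasserstein space to $\bar Y^{i,N}$ and invoking the master PDE \eqref{eq:master pde} to cancel the bulk drift, one obtains a backward equation of the form
\[
d\bar Y^{i,N}_t = -F\big(t,\cX^{i,N}_t,\bar Y^{i,N}_t,\Xi^{N}_t\big)\,dt + R^{i,N}_t\,dt + dM^{i,N}_t,
\]
where $\Xi^{N}_t$ denotes the empirical joint law of $(\cX^{j,N},\bar Y^{j,N})_j$, $M^{i,N}$ is a square-integrable martingale, and $R^{i,N}$ collects the cross-derivative remainders appearing in the empirical-measure It\^o formula; Lipschitz continuity of $V$ yields $|R^{i,N}_t|\le C/N$ uniformly. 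Subtracting this equation from \eqref{eq:true.system.N} and using \eqref{eq:eps.zeta.gamma}, the pair $(\Delta X^i,\Delta Y^i)$ solves an FBSDE whose coefficients are Lipschitz in $(\Delta X^i,\Delta Y^i)$ and in the Wasserstein distance between $L^N(\underline X_t)$ and $L^N(\underline{\cX}_t)$, with terminal and driver perturbations bounded by $C/N$. Since $\cW_2(L^N(\underline X_t),L^N(\underline{\cX}_t))^2\le\frac{1}{N}\sum_j|\Delta X^j_t|^2$, the empirical-measure coupling is compatible with per-particle $\mathcal{S}^\infty$-bounds; for $T$ sufficiently small, a standard FBSDE fixed-point estimate (e.g.\ \cite[Theorem 4.2]{MR3752669}) based on the Lipschitz constants of $B$, $F$, $G$ and $V$ yields $\|\Delta X^i\|_{\mathcal{S}^\infty}+\|\Delta Y^i\|_{\mathcal{S}^\infty}\le C/N$ uniformly in $i$ and $N$, and summation in $i$ delivers \eqref{eq:bound.X.Y}.

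For arbitrary $T$, the small-$T$ fixed-point step must be replaced by the monotonicity-based a priori FBSDE estimates of \cite{Peng-Wu99} and \cite{Ben-Yam-Zhang15}: hypothesis \ref{a8} provides precisely the dissipativity required, the threshold $c(L_b,L_\Lambda)$ quantifying how much $K_b$ must dominate the Lipschitz constants picked up when $\Lambda$ is composed into $B$ and $F$. The main obstacle is twofold. First, the rigorous derivation of the remainder $R^{i,N}$ requires the empirical-measure It\^o formula from \cite[Chapter 5]{MR3752669} together with verification that the Lipschitz coupling through empirical measures does not degrade the per-particle order $1/N$ --- this will rely on the symmetry of the system to close the estimate self-consistently. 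Second, in the monotone regime one must check that the comparison FBSDE for $(\Delta X^i,\Delta Y^i)$ inherits enough of the monotonicity structure of \ref{a8} to invoke \cite{Peng-Wu99}, which should follow from a perturbation argument on the dissipativity constants since the perturbations are additive and of size $O(1/N)$.
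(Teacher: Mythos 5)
Your treatment of \eqref{eq:bound.sum} and \eqref{eq:eps.zeta.gamma} matches the paper's proof in both method and spirit: represent each $Y^{i,j}_t$ as a conditional expectation, exploit boundedness of the derivatives of $b,f,g$ (including the $\mu$-derivatives from \ref{a4} and \ref{a65}), sum over $j$, and close by Gronwall; then observe that $\varepsilon^{i,N},\gamma^{i,N},\zeta^{i,N}$ carry an explicit $1/N$ prefactor multiplying quantities that are now known to be bounded. That part is fine.

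For \eqref{eq:bound.X.Y} there are two issues. First, you assert that ``Lipschitz continuity of $V$ yields $|R^{i,N}_t|\le C/N$.'' This is not justified: the remainder produced by the empirical-measure It\^o formula applied to $V(t,\cX^{i,N}_t,L^N(\underline\cX_t))$ consists of terms proportional to $\tfrac1N\partial_x\partial_\mu V$ and $\tfrac1{N^2}\partial^2_\mu V$, i.e.\ \emph{second}-order derivatives of $V$ in the measure argument. Lipschitz continuity of $V$ controls only the first derivative and says nothing about these; under the paper's standing hypothesis \ref{a6} (which requires only Lipschitz continuity of $V$) the bound you claim does not follow. The paper side-steps this by writing the backward equation for $\cY^{i,N}_t=V(t,\cX^{i,N}_t,L^N(\underline\cX_t))$ with no remainder term at all, which is a separate concern, but in any case your bound on $R^{i,N}$ needs a genuinely different justification (e.g.\ an extra smoothness hypothesis on $V$). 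Second, the asserted per-particle estimate $\|\Delta X^i\|_{\mathcal{S}^\infty}+\|\Delta Y^i\|_{\mathcal{S}^\infty}\le C/N$ uniformly in $i$ is an over-claim, and the appeal to a ``standard FBSDE fixed-point estimate'' does not deliver it: what the structure of the coupled system yields (and what the paper actually proves, via an explicit pair of Gronwall inequalities on $\sum_i|\Delta X^i|^2$ and $\sum_i|\Delta Y^i|^2$ absorbed into one another through the choice of Young parameters $\varepsilon,\eta,\varepsilon_1,\eta_1$) is an aggregate bound $\sum_i|\Delta X^i_t|^2+\sum_i|\Delta Y^i_t|^2=O(1/N)$, from which \eqref{eq:bound.X.Y} follows by Cauchy--Schwarz. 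This aggregate estimate, obtained by summing the squared-difference inequalities over $i$ before applying Gronwall so that the empirical-measure Lipschitz terms close self-consistently, is the real content of the lemma, and your proposal replaces it with a black-box citation that does not apply verbatim to an $N$-coupled McKean--Vlasov-type FBSDE system. You should carry out that coupled Gronwall argument explicitly, both in the small-$T$ case and (with \ref{a8}) in the monotone case, checking in the latter that the exponent $\delta(\varepsilon,\eta)$ can be made nonpositive by choosing $K_b$ large relative to $L_b$ and $L_\Lambda$.
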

\begin{proof}
	Taking the conditional expectation in \eqref{eq:Y.ij} with respect to $\mathcal{F}^N_t$ and recalling the definition of $H^{N,i}$, we get
	\begin{align*}
		|Y^{i,j}_t|&\leq \EE\bigg[\Big|\delta_{\{i=j\}}\partial_xg\Big(X^{i,\underline{\hat{\alpha}}}_T,L^N(\underline{X}^{\underline{\hat{\alpha}}}_T)\Big)\Big|+\frac{1}{N}\Big|\partial_{\mu}g\Big(X^{i,\underline{\hat{\alpha}}}_T,L^N(\underline{X}^{\underline{\hat{\alpha}}}_T)\Big)(X^{j,\underline{\hat{\alpha}}}_T)\Big|\Big|\mathcal{F}^N_t\bigg]\\
		& + \EE\bigg[\int_t^T\Big(\Big|\delta_{\{i=j\}}\partial_{x}f\Big(s,X^{i,\underline{\hat{\alpha}}}_s,\hat{\alpha}^i_s,L^N(\underline{X}^{\underline{\hat{\alpha}}}_s,\underline{\hat{\alpha}}_s)\Big)\Big|+\Big|\frac{1}{N}\partial_{\mu}f\Big(s,X^{i,\underline{\hat{\alpha}}}_s,\hat{\alpha}^i_s,L^N(\underline{X}^{\underline{\hat{\alpha}}}_s,\underline{\hat{\alpha}}_s)\Big)(X^{j,\underline{\hat{\alpha}}}_s)\Big|\Big)ds\Big|\mathcal{F}^N_t\bigg]\\
		& + \EE\bigg[\int_t^T\Big(\Big|\partial_xb\Big(s,X^{j,\underline{\hat{\alpha}}}_s,\hat{\alpha}^j_s,L^N(\underline{X}^{\underline{\hat{\alpha}}}_s,\underline{\hat{\alpha}}_s)\Big)Y^{i,j}_s\Big|+\frac{1}{N}\sum_{k=1}^N\Big|\partial_\mu b\Big(s,X^{k,\underline{\hat{\alpha}}}_s,\hat{\alpha}^k_s,L^N(\underline{X}^{\underline{\hat{\alpha}}}_s,\underline{\hat{\alpha}}_s)\Big)(X^{j,\underline{\hat{\alpha}}}_s)Y^{i,k}_s\Big|\Big)ds\Big|\mathcal{F}^N_t\bigg].
	\end{align*}
	Next, using boundedness of the functions $\partial_xg$, $\partial_{\mu}g$, $\partial_xf$, $\partial_{\mu}f$, $\partial_xb$ and $\partial_{\mu} b$ yields
	\begin{align*}
		|Y^{i,j}_t|&\leq C_1\delta_{\{i=j\}}+ C_2/N+C_3\EE\bigg[\int_t^T\Big(|Y^{i,j}_s|+\frac{1}{N}\sum_{k=1}^N|Y^{i,k}_s|\Big)ds\Big|\mathcal{F}^N_t\bigg].
	\end{align*}
	Summing up over $j$ and using Gronwall's inequality yields \eqref{eq:bound.sum}.

	Let us now show \eqref{eq:eps.zeta.gamma}.
	This follows directly from the above bound and the fact that the measure derivatives of the coefficients $b,f$ and $g$ are bounded.
	In fact, it follows by definition of $\varepsilon^N,\gamma^N$ and $\zeta^N$ that
	\begin{equation*}
		|\varepsilon^N| + |\gamma^N| + |\zeta^N| \le C_1\frac1N + C_2\frac1N\sum_{j=1}^N|Y^{i,j}_t|\le C/N,
	\end{equation*}
	where the second equality follows by \eqref{eq:bound.sum}.

	We will prove the bound \eqref{eq:bound.X.Y} only when the monotonicity condition \ref{a8} is satisfied.
	The case $T$ small follows by similar arguments, we will explain the difference at the end of the proof.
	Thus, we start by denoting
	$$\cY^{i,N}_t := V(t,\cX^{i,N}_t,L^N(\underline\cX_t)).$$ 
	Recall the function $\bar B$ introduced in \eqref{eq:def.barB} and the auxiliary control 	
	\begin{equation*}
		\alpha^{i,N}_t := \Lambda\Big(t,\cX^{i,N}_t, V\big(t,\cX^{i,N}_t, L^N(\underline{\cX}_t)\big),L^N(\underline{ \cX}_t) \Big).
	\end{equation*}
	By Lipschitz--continuity of the functions $b,\Lambda$ and\footnote{Without loss of any generality, we denote the Lipschitz constant of $\widehat{\Lambda}$ by $L_{\Lambda}$.} $\widehat{\Lambda}$ and recalling the representation \eqref{eq:N Nash} of the Nash equilibrium $\hat\alpha^{i,N}$, we have
	\begin{align*}
		&|X^{i,\underline{\hat{\alpha}}}_t - \cX^{i,N}_t|^2
		= 2\int_0^t(X^{i,\underline{\hat{\alpha}}}_s - \cX^{i,N}_s)\Big(b\big(s,X^{i,\underline{\hat{\alpha}}}_s,\hat{\alpha}^{i,N}_s,L^N(\underline{X}^{\underline{\hat{\alpha}}}_s,\underline{\hat{\alpha}}_s) \big) - \overline B\big(s,\cX^{i,N}_s,L^N(\underline{\cX}_s) \big) \Big)ds\\
		&= 2\int_0^t(X^{i,\underline{\hat{\alpha}}}_s - \cX^{i,N}_s)\bigg(b\big(s,X^{i,\underline{\hat{\alpha}}}_s,\hat{\alpha}^{i,N}_s,L^N(\underline{X}^{\underline{\hat{\alpha}}}_s,\underline{\hat{\alpha}}_s) \big)\\
		&\qquad - b\big(s,\cX^{i,N}_s,\Lambda(s,\cX^{i,N}_s,\cY^{i,N}_s,L^N(\underline\cX_s)),L^N(\underline\cX_s,\underline{\alpha}_s)) \big) \bigg)ds\\
		&\leq 2\int_0^t|X^{i,\underline{\hat{\alpha}}}_s - \cX^{i,N}_s|\bigg(-K_b|X^{i,\underline{\hat{\alpha}}}_s - \cX^{i,N}_s|+ L_{b,a}L_\Lambda|Y^{i,i}_s-\cY^{i,N}_s|+L_{b,a}L_\Lambda\mathcal{W}_2(L^N(\underline{X}^{\underline{\hat{\alpha}}}_s),L^N(\underline{\cX}_s))\\
		&\qquad+L_{b,a}L_\Lambda|\zeta^{i,N}_s|+L_{b,\xi}\mathcal{W}_2(L^N(\underline{X}^{\underline{\hat{\alpha}}}_s,\underline{\hat{\alpha}}_s),L^N(\underline{\cX}_s,\underline{\alpha}_s))\bigg)ds\\
		&\leq 2\int_0^t|X^{i,\underline{\hat{\alpha}}}_s - \cX^{i,N}_s|\bigg(-K_b|X^{i,\underline{\hat{\alpha}}}_s - \cX^{i,N}_s|+L_{b,a}L_\Lambda|Y^{i,i}_s-\cY^{i,N}_s|+ (L_{b,a}L_\Lambda+L_{b,\xi})\Big\{\frac{1}{N}\sum_{j=1}^N|X^{j,\underline{\hat{\alpha}}}_s-\cX^{j,N}_s|^2\Big\}^{1/2}\\
		&\qquad+4L_\Lambda L_{b,\xi}\Big\{\frac{1}{N}\sum_{j=1}^N\Big(|X^{j,\underline{\hat{\alpha}}}_s - \cX^{j,N}_s|^2+|Y^{j,j}_s-\cY^{j,N}_s|^2 + \frac{1}{N}\sum_{j=1}^N|X^{j,\underline{\hat{\alpha}}}_s-\cX^{j,N}_s|^2 + |\zeta^{j,N}_s|^2\Big)\Big\}^{1/2}\\
		&\qquad +L_{b,a}L_\Lambda|\zeta^{i,N}_s|\bigg)ds\\
		&\leq 2\int_0^t|X^{i,\underline{\hat{\alpha}}}_s - \cX^{i,N}_s|\bigg(-K_b|X^{i,\underline{\hat{\alpha}}}_s - \cX^{i,N}_s|+L_{b,a}L_\Lambda|Y^{i,i}_s-\cY^{i,N}_s|+ (L_{b,a}L_\Lambda+L_{b,\xi})\Big\{\frac{1}{N}\sum_{j=1}^N|X^{j,\underline{\hat{\alpha}}}_s-\cX^{j,N}_s|^2\Big\}^{1/2}\\
		&\qquad+8L_{b,\xi}L_\Lambda\Big\{\frac{1}{N}\sum_{j=1}^N|X^{j,\underline{\hat{\alpha}}}_s - \cX^{j,N}_s|^2\Big\}^{1/2}+4L_{b,\xi}L_\Lambda\Big\{\frac{1}{N}\sum_{j=1}^N|Y^{j,j}_s-\cY^{j,N}_s|^2\Big\}^{1/2}\\
		&\qquad+4L_{b,\xi}L_\Lambda\Big\{\frac{1}{N}\sum_{j=1}^N|\zeta^{j,N}_s|^2\Big\}^{1/2} + L_{b,a}L_\Lambda|\zeta^{i,N}_s|\bigg)ds.
	\end{align*}
	Applying Young's inequality for some $\varepsilon>0$ and $\eta>0$ to be determined, we continue the estimation as
	\begin{align*}
		|X^{i,\underline{\hat{\alpha}}}_t - \cX^{i,N}_t|^2 & \leq 2\int_0^t\bigg(\Big({\frac{L_{b,\xi}+L_{b,a}L_\Lambda+8L_{b,\xi}L_{\Lambda}}{2}+\frac{L_{b,a}^2L_\Lambda^2+16L_{b,\xi}^2L_{\Lambda}^2}{2\varepsilon}+\eta-K_b}\Big)|X^{i,\underline{\hat{\alpha}}}_s - \cX^{i,N}_s|^2\\
		&\qquad+\frac{\varepsilon}{2}|Y^{i,i}_s-\cY^{i,N}_s|^2+ \frac{{L_{b,\xi}+L_{b,a}L_\Lambda+8L_{b,\xi}L_{\Lambda}}}{2}\frac{1}{N}\sum_{j=1}^N|X^{j,\underline{\hat{\alpha}}}_s-\cX^{j,N}_s|^2\\
		&\qquad+\frac{L_{b,a}^2L_\Lambda^2}{2\eta}|\zeta^{i,N}_s|^2+\frac{\varepsilon}{2}\frac{1}{N}\sum_{j=1}^N|Y^{j,j}_s-\cY^{j,N}_s|^2+\frac{8{L_{b,\xi}^2}L_\Lambda^2}{\eta}\frac{1}{N}\sum_{j=1}^N|\zeta^{j,N}_s|^2\bigg)ds.
	\end{align*}
	Let us introduce the quantity
	\begin{equation*}
       \delta(\varepsilon,\eta)= L_{b,\xi}+L_{b,a}L_\Lambda+8L_{b,\xi}L_{\Lambda}+\frac{L_{b,a}^2L_\Lambda^2+16L_{b,\xi}^2L_{\Lambda}^2}{2\varepsilon}+\eta-K_b.
    \end{equation*}
	Summing up over $i$ on both sides and using Gronwall's inequality implies that
	\begin{equation}
		\label{eq:bound.on.diff.X_mon}
			\sum_{i=1}^N|X^{i,\underline{\hat{\alpha}}}_t - \cX^{i,N}_t|^2\leq 2e^{2\delta(\varepsilon,\eta)t}\int_0^T\Big(\varepsilon\sum_{i=1}^N|Y^{i,i}_s-\cY^{i,N}_s|^2+\frac{L_{b,a}^2L_\Lambda^2+16L_{b,\xi}^2L_{\Lambda}^2}{2\eta}\sum_{i=1}^N|\zeta^{i,N}_s|^2\Big)ds.
	\end{equation}
	On the other hand, since $V$ is a classical solution of the PDE \eqref{eq:master pde},
	applying It\^{o}'s formula for functions of the law of a diffusion (see \cite[Theorem 5.104]{MR3752669}) implies that $\cY^{i,N}$ satisfies
	\begin{equation*}
		\cY^{i,N}_t=\partial_xg(\cX^{i,N}_T,L^N(\underline \cX_T))+\int_t^TF(s,\cX^{i,N}_s,\cY^{i,N}_s,L^N(\underline\cX_s,\underline\cY_s))ds-\int_t^T\sum_{k=1}^{N}\cZ^{i,N,k}_sdW^k_s
	\end{equation*}
	for an $(\cF^N_t)$--progressive process $\cZ^{i,k,N}$ given by 
	$$\cZ^{i,k,N}_t:=\partial_{x^k}V(t, \mathcal{X}^i_t, L^N(\underline{\cX}_t)).$$ 
	Using the Lipschitz--continuity condition on  $\partial_x H(t,x,y,a,\xi)$ and $\partial_xg(x,\mu)$ (see \ref{a3}), it follows by \eqref{eq:true.system.N} and \eqref{eq:aux.fbsde} that
	\begin{align*}
		&|Y^{i,i}_t-\cY^{i,N}_t|^2\\
		&= \EE\bigg[\Big|\partial_xg(X^{i,\underline{\hat{\alpha}}}_T,L^N(\underline{X}^{\underline{\hat{\alpha}}}_T))+\gamma^{i,N}-\partial_xg(\cX^{i,N}_T,L^N(\underline{\cX_T}))\Big|^2 \Big| \cF^N_t\bigg]\\
		&\quad+ \EE\bigg[2\int_t^T\big(Y^{i,i}_s-\cY^{i,N}_s\big)\bigg(\partial_xf\Big(s,X^{i,\underline{\hat{\alpha}}}_s,\hat{\alpha}^{i,N}_s,L^N(\underline{X}^{\hat{\alpha}}_s,\underline{\hat{\alpha}}_s)\Big)+\partial_xb\Big(s,X^{i,\underline{\hat{\alpha}}}_s,\hat{\alpha}^{i,N}_s,L^N(\underline{X}^{\hat{\alpha}}_s,\underline{\hat{\alpha}}_s)\Big)Y^{i,i}_s+\varepsilon^{i,N}_s\\
		&\qquad-\partial_xf\Big(s,\cX^{i,N}_s,\Lambda\big(s,\cX^{i,N}_s,\cY^{i,N}_s,L^N(\underline{\cX}_s)\big),L^N(\underline{\cX}_s,\underline{\alpha}_s)\Big)\\
		&\qquad-\partial_xb\Big(s,\cX^{i,N}_s,\Lambda\big(s,\cX^{i,N}_s,\cY^{i,N}_s,L^N(\underline{\cX}_s)\big),L^N(\underline{\cX}_s,\underline{\alpha}_s)\Big)\cY^{i,N}_s\bigg)ds \Big| \mathcal{F}^N_t\bigg]\\
		&\leq \EE\bigg[3L_g^2|X^{i,\underline{\hat{\alpha}}}_T-\cX^{i,N}_T|^2+\frac{3L_g^2}{N}\sum_{j=1}^N|X^{j,\underline{\hat{\alpha}}}_T - \cX^{j,N}_T|^2+3|\gamma^{i,N}|^2\bigg| \cF^N_t\bigg]\\
		&\quad+ \EE\bigg[2\int_t^T|Y^{i,i}_s-\cY^{i,N}_s|\bigg(
		L_f|X^{i,\underline{\hat{\alpha}}}_s-\cX^{i,N}_s|+L_f|Y^{i,i}_s-\cY^{i,N}_s|+|\varepsilon^{i,N}_s|\\
		&\qquad+L_f\Big|\hat{\alpha}^{i,N}_s-\Lambda\big(s,\cX^{i,N}_s,\cY^{i,N}_s,L^N(\underline{\cX}_s)\big)\Big|+ L_f\mathcal{W}_2\Big(L^N(\underline{X}^{\underline{\hat{\alpha}}}_s,\underline{\hat{\alpha}}_s),L^N(\underline{\cX}_s,\underline{\alpha}_s)\Big)ds \bigg)\Big| \mathcal{F}^N_t\bigg]\\
		&\leq \EE\bigg[L_g^2|X^{i,\underline{\hat{\alpha}}}_T-\cX^{i,N}_T|^2+\frac{3L_g^2}{N}\sum_{j=1}^N|X^{j,\underline{\hat{\alpha}}}_T - \cX^{j,N}_T|^2 + 3|\gamma^{i,N}|^2\bigg|\cF^N_t\bigg]\\
		&\quad+\EE\bigg[2\int_t^T|Y^{i,i}_s-\cY^{i,N}_s|\bigg(
		L_f(1+L_\Lambda)\big(|X^{i,\underline{\hat{\alpha}}}_s-\cX^{i,N}_s|+|Y^{i,i}_s-\cY^{i,N}_s| \big) +|\varepsilon^{i,N}_s|+L_fL_\Lambda|\zeta^{i,N}_s|\\
		&\quad+L_f\Big\{ \frac{1}{N}\sum_{j=1}^N\Big((1+4L^2_\Lambda)|X^{j,\underline{\hat{\alpha}}}_s-\cX^{j,N}_s|^2+4L_\Lambda^2|Y^{j,j}_s-\cY^{j,N}_s|^2+4L_\Lambda^2|\zeta^{j,N}_s|^2 +4L_\Lambda^2\frac{1}{N}\sum_{j=1}^N |X^{j,\underline{\hat{\alpha}}}_s-\cX^{j,N}_s|^2\Big) \Big\}^{1/2}\\
		&\quad + L_fL_\Lambda\Big\{\frac{1}{N}\sum_{j=1}^N |X^{j,\underline{\hat{\alpha}}}_s-\cX^{j,N}_s|^2\Big\}^{1/2} \bigg)\, ds \Big| \mathcal{F}^N_t\bigg]\\
		&\leq \EE\bigg[3L_g^2|X^{i,\underline{\hat{\alpha}}}_T-\cX^{i,N}_T|^2+3L_g^2\Big\{\frac{1}{N}\sum_{j=1}^N|X^{j,\underline{\hat{\alpha}}}_T - \cX^{j,N}_T|^2\Big\}+3|\gamma^{i,N}|^2\bigg| \cF^N_t\bigg]\\
		&\quad+ \EE\bigg[2\int_t^T|Y^{i,i}_s-\cY^{i,N}_s|\bigg(
		L_f(1+L_\Lambda )\big(|X^{i,\underline{\hat{\alpha}}}_s-\cX^{i,N}_s|+|Y^{i,i}_s-\cY^{i,N}_s|\big)+|\varepsilon^{i,N}_s|+L_fL_\Lambda|\zeta^{i,N}_s|\\
		&\quad +\Big(L_f(1 + L_\Lambda)\sqrt{1+8L_\Lambda^2}\Big)\Big\{\frac{1}{N}\sum_{j=1}^N |X^{j,\underline{\hat{\alpha}}}_s-\cX^{j,N}_s|^2\Big\}^{1/2}\\
		&\quad+2L_fL_\Lambda\Big\{ \frac{1}{N}\sum_{j=1}^N|Y^{j,j}_s-\cY^{j,N}_s|^2\Big\}^{1/2}+2L_fL_\Lambda\Big\{ \frac{1}{N}\sum_{j=1}^N|\zeta^{j,N}_s|^2\Big\}^{1/2}\bigg)\, ds \Big| \mathcal{F}^N_t\bigg].
	\end{align*}
	Applying Young's inequality for some constants $\varepsilon_1>0$ and $\eta_1>0$ to be determined, we continue the estimations as
	\begin{align*}
	|Y^{i,i}_t-\cY^{i,N}_t|^2
		&\leq \EE\bigg[3L_g^2|X^{i,\underline{\hat{\alpha}}}_T-\cX^{i,N}_T|^2+3L_g^2\Big\{\frac{1}{N}\sum_{j=1}^N|X^{j,\underline{\hat{\alpha}}}_T - \cX^{j,N}_T|^2\Big\}+3|\gamma^{i,N}|^2\bigg| \cF_t^N\bigg]\\
		&\qquad+ \EE\bigg[ \int_t^T\bigg( (2L_f+2L_\Lambda L_f+3\varepsilon_1+3\eta_1)|Y^{i,i}_s-\cY^{i,N}_s|^2+\frac{(L_f+L_fL_\Lambda)^2}{\varepsilon_1}|X^{i,\underline{\hat{\alpha}}}_s-\cX^{i,N}_s|^2\\
		&\qquad +\frac{(L_\Lambda L_f+ L_f\sqrt{1+8L_\Lambda^2})^2}{\varepsilon_1}\frac{1}{N}\sum_{j=1}^N |X^{j,\underline{\hat{\alpha}}}_s-\cX^{j,N}_s|^2+\frac{4L_f^2L_\Lambda^2}{\varepsilon_1} \frac{1}{N}\sum_{j=1}^N|Y^{j,j}_s-\cY^{j,N}_s|^2\\
		&\qquad+\frac{3}{\eta_1}|\varepsilon^{i,N}_s|^2+\frac{3L_f^2L_\Lambda^2}{\eta_1}|\zeta^{i,N}_s|^2+\frac{6L_\Lambda^2 L_f^2}{\eta_1} \frac{1}{N}\sum_{j=1}^N|\zeta^{j,N}_s|^2 \bigg)\, ds \Big| \mathcal{F}^N_t\bigg],
	\end{align*}
	where we also used the representation \eqref{eq:N Nash} of $\hat\alpha^{i,N}$ and the fact that $\Lambda$ is Lipschitz--continuous.
	Denoting
	\begin{equation*}
        \delta(\varepsilon_1,\eta_1) :=2L_f+2L_\Lambda L_f+3\varepsilon_1+3\eta_1+\frac{4L_\Lambda^2L_f^2}{\varepsilon_1}
    \end{equation*}
	summing up on both sides and applying Gronwall inequality, we obtain
	\begin{align*}
		&\sum_{i=1}^N|Y^{i,i}_t-\cY^{i,N}_t|^2 \\
		&\leq e^{\delta(\varepsilon_1,\eta_1)(T-t)}\bigg\{ 
			6L_f^2+\Big(\frac{L_f(1+L_\Lambda)^2}{\varepsilon_1}+\frac{L_\Lambda(L_f+\sqrt{1+8L_\Lambda^2})^2}{\varepsilon_1}  \Big)T\bigg\}\Big\|\sum_{i=1}^N|X^{i,\underline{\hat{\alpha}}} - \cX^{i,N}|^2\Big\|_{\mathcal{S}^{\infty}}\\
			&\quad+3e^{\delta(\varepsilon_1,\eta_1)(T-t)}\bigg\{\sum_{i=1}^N\|\gamma^{i,N}\|_{L^{\infty}}^2+\frac{1}{\eta_1}\sum_{i=1}^N\|\varepsilon^{i,N}\|_{\mathcal{S}^{\infty}}^2+3\frac{L_\Lambda^2L_f^2}{\eta_1}\sum_{i=1}^N\|\zeta^{i,N}\|_{\mathcal{S}^{\infty}}^2
		\bigg\}.
	\end{align*}
	If $K_b$ is large enough, then $\delta(\varepsilon, \eta)\le 0$.
	In this case, combining this with \eqref{eq:bound.on.diff.X_mon}, it follows that we can choose $\varepsilon$ small enough that
	\begin{equation}
	\label{eq:cond.Tvseps}
		2\varepsilon Te^{2\delta(\varepsilon,\eta)T+\delta(\varepsilon_1,\eta_1)T}\bigg\{ 
			6L_f^2+\Big(\frac{L_f(1+L_\Lambda)^2}{\varepsilon_1}+\frac{L_\Lambda(L_f+\sqrt{1+8L_\Lambda^2})^2}{\varepsilon_1}  \Big)T\bigg\} < 1.
	\end{equation}
	Therefore, we have
	\begin{equation*}
		\Big\|\sum_{i=1}^N|Y^{i,i}-\cY^{i,N}|\Big\|_{\mathcal{S}^{\infty}(\mathbb{R}^\xdim)}\leq C\sum_{i=1}^N\Big(\|\gamma^{i,N}\|_{L^{\infty}}+\|\varepsilon^{i,N}\|_{\mathcal{S}^{\infty}}+\|\zeta^{i,N}\|_{\mathcal{S}^{\infty}}\Big)
	\end{equation*}
	for a constant $C>0$.
	Since the bound of $\gamma^{i,N},\varepsilon^{i,N},\zeta^{i,N}$ is $O(N^{-1})$, we obtain \eqref{eq:bound.X.Y} in view of \eqref{eq:bound.on.diff.X_mon}.

	When the monotonicity condition \ref{a8} is not assumed, we can use the same argument (with $K_b=-L_{b}-L_{b,a}L_{\Lambda}$) and in this case we need $T$ small enough to get \eqref{eq:cond.Tvseps}.
\end{proof}

\begin{remark}
	\begin{itemize}
		\item[(i)]Observe that in \ref{a8}, the condition \eqref{eq:mon.con.b.H.g} is needed only to guarantee existence for the FBSDE \eqref{eq:aux.fbsde} for arbitrary time horizons.
		When these FBSDEs are known to have solutions only the monotonicity condition \eqref{eq:mon.con.b} on $b$ is needed for the LDP.
		\item[(ii)]It is easily checked that when the functions $\partial_xf, \partial_xg, \partial_\mu f$ and $\partial_\mu g$ are of linear growth the conclusion of Lemma \ref{lem:bound_Y} remains true for $T$ small enough. The boundedness conditions are needed for the extension of the arguments to arbitrarily large time horizons.
	\end{itemize}
\end{remark}

\subsection{Example: A model of systemic risk}
\label{sec:example}
	For illustration, consider (as slight modification of) the problem of inter-bank borrowing and lending studied by \citet{MR3325083}. 
In the systemic risk model proposed by these authors, bank i's reserve is given by the dynamics
\begin{equation*}
	dX^i_t =a\Big(\frac1N\sum_{j=1}^NX^j_t -X^i_t \Big) + \alpha^i_t\, dt + \sigma \,dW^i_t
\end{equation*}
for a given mean-reverting parameter $a\ge0$.
Each bank controls its borrowing and lending rate $\alpha^i$ at time $t$ by choosing it so as to minimize the cost function
\begin{equation*}
	J^i(\underline \alpha) = \EE\Big[\int_0^Tf(X_t^i, L^N(\underline X_t), \alpha^i_t)\,dt + g(X^i_T) \Big]
\end{equation*}
with 
\begin{equation*}
	f(x,\mu,\alpha):= \frac12\alpha^2 -q\alpha\Big(\int_{\mathbb{R}}z\mu(dz) - x\Big) + \frac{\varepsilon}{2}\Big|\int_{\mathbb{R}}z\mu(dz) - x\Big| \quad\text{and}\quad g(x) = \frac{c}{2}\Big|\int_{\mathbb{R}}z\mu(dz) - x \Big|
\end{equation*}
where $q,c$ and $\varepsilon$ are strictly positive parameters such that $q^2\le \varepsilon$.
With these specifications of the coefficients of the game, the conditions \ref{a1}-\ref{a5} are clearly satisfied.
To check \ref{a6}, notice that the function $\Lambda$ therein now takes the form
\begin{equation*}
	\Lambda(x,\mu,y) = q\Big(\int_{\mathbb{R}}z\mu(dz) - x\Big) - y.
\end{equation*}
In particular, the functions $B,F$ and $G$ in \ref{a6} are linear and time-independent.
Thus, it follows from \cite[Proposition 5.2]{ChassagneuxCrisanDelarue_Master} and \cite[Lemma 5.6]{Car-Del15}  that the PDE \eqref{eq:master pde} admits a solution $V$ which is Lipschitz--continuous in its second and third variables.
Therefore, if for each $N$ the finite player game admits a Nash equilibrium, (see e.g. \cite[Section 3.1]{MR3325083} for details on the existence) $\underline{\hat\alpha} = (\hat\alpha^{1,N},\dots\hat\alpha^{N,N})$ then it follows by Theorem \ref{thm:LDP} that if $T$ is small enough, then the sequence $(L^N(\underline{X}^{\underline{\hat\alpha}}))$ satisfies the LDP with rate function given by \eqref{eq:rate.X}, wherein
\begin{equation*}
	\overline B(t,x, \mu) := (a+ q) \Big( \int_{\mathbb{R}}z\mu(dz) - x \Big) - V(t, x,\mu).
\end{equation*}
For LDP with $T$ arbitrary, by Theorem \ref{thm:LDP-Tlarge}, one needs additional monotonicity properties. 
These can be guaranteed by appropriate conditions on the constants $\varepsilon, q, c$ $a$ and $T$.
Note that 
the large deviation principle for this example was obtained in \cite{Del-Lac-Ram_Concent} for the case of closed-loop controls.

\section{Large deviation principle for cooperative games: a case study}
\label{sec:ldp-mfc}
As explained in the introduction, the method developed in this article also applies to cooperative large population games.
Such games are important in several applications, especially when a ``societal goal'' should be achieved.
We refer the reader for instance to \cite{Carmona-Delarue12} and \cite{Gob-Gran19}.
In the general setting, and keeping the notation of the previous section, a continuous time differential cooperative game takes the form
\begin{equation}
\label{eq:Coop.game}
	V^N := \inf_{\ua \in \mathcal{A}^N}\frac1N\sum_{i=1}^N\EE\Big[\int_0^T f(t, X^{i,\ua}_t, {\alpha^i_t}, L^N(\underline{X}^{\ua}_t, \ua_t))\,dt + g(X^{i,\ua}_T, L^N(X^{\ua}_T)) \Big],
\end{equation}
where $\mathcal{A}^N$ is the $N$-fold cartesian product of the set $\mathcal{A}$.
In \cite[Theorem 25]{pontryagin}, conditions on the parameters $b,f$ and $g$ are given to guarantee that, if the problem \eqref{eq:Coop.game} admits a solution $\hat\alpha^N=(\hat\alpha^{1,N},\dots, \hat\alpha^{N,N})$, then for each $i$, and each $t$, the sequence $(\hat\alpha^{i,N}_t)$ converges to $\hat\alpha_t$ in $\mathbb{L}^2$, where $\hat\alpha$ solves the McKean-Vlasov control problem
\begin{equation}
\label{eq:MckV.control}
	\begin{cases}
		\inf_{\alpha \in \mathfrak{A}}E\Big[\int_0^Tf(t, X^{\alpha}_t, \alpha_t, \cL(X^{\alpha}_t,\alpha_t))\,dt + g(X^\alpha_T, \cL(X^\alpha_T)) \Big]\\
		dX^\alpha_t = b(t, X^\alpha_t,\alpha_t,\cL(X^\alpha_t, \alpha_t))\,dt + \sigma\,dW_t,\quad X^\alpha_0 =x.
	\end{cases}
\end{equation}

Since the derivation of the Laplace principle for this game is similar to the non-cooperative case, in order to avoid repetitions we will focus here on the linear quadratic case.
Thus, let us specify the coefficients as follows:
\begin{equation}
\label{eq:LQ-setting-fbg}
	\begin{cases}
		f(t, x, \alpha, \xi) &= Q|x|^2 + \bar Q|\overline x|^2 + R |\alpha|^2 + \bar R |\overline \alpha|^2 + \bar S x \overline \alpha,\\
		b(t, x, \alpha, \xi) &= Ax + \bar A\overline x + B \alpha + \bar B \overline \alpha,\\
		g(x, \mu) &= Q_Tx^2 + \bar Q_T\overline x^2
	\end{cases}
\end{equation}
for some real numbers $A, \bar A, B, \bar B, R, \bar R, Q, \bar Q, Q_T, \bar Q_T, R$ and $\bar R$, and where $\bar x$ and $\bar \alpha$ are the mean of the first and second marginals of $\xi$, respectively.
We assume that $x$ and $\alpha$ have the same dimension denoted $m$.
To state the LDP in this case, let us introduce the following notation:
We consider the functions
\begin{equation*}
	\ell(y, \bar x, \bar y) := -\frac{1}{2 R} \left[ B y  + \bar S  \left( 1 - \frac{\bar R }{R + \bar R} \right) \bar{x} 
			 + \left( \bar B - \frac{\bar R }{R + \bar R}  (B + \bar B) \right) \bar{y}    \right]
\end{equation*}
and
\begin{align*}
	\ell_1(x, y, \xi) &:=  A x + \bar{A}\EE^{\xi_1}[X] + B\ell\Big( y, \EE^{\xi_1}[X], \EE^{\xi_2}[Y] \Big)  + \bar{B}\EE^{\xi_2}\Big[\ell\Big( Y, \EE^{\xi_1}[X], \EE^{\xi_2}[Y] \Big)\Big]\\
		& = Ax + \Big[\bar A - \frac{1}{2R}(SB + \bar S\bar B)\Big(1-\frac{\bar R}{R+\bar R}\Big) \Big]\EE^{\xi_1}[X] \\
		&\quad-\frac{B^2}{2R}y - \frac{1}{2R}\Big[\bar BB + (B+\bar B)\Big(\bar B - \frac{\bar R}{R + \bar R}(B + \bar B) \Big) \Big]\EE^{\xi_2}[Y].
\end{align*}
where $\xi_i$ is the $i^{th}$ marginal of $\xi$, and $\EE^{\xi_1}[X]$ and $\EE^{\xi_2}[Y]$ are the means of $\xi_1$ and $\xi_2$, respectively.
We also consider
\begin{equation*}
	\ell_2(x, y, \xi) = 2 Q x + \Big[ 2 \bar{Q}  -\frac{\bar{S}^2}{2 (R + \bar R)} \Big]\EE^{\xi_1}[X]   + A y + \Big[ \bar{A} -\frac{\bar{S} (B + \bar B)}{2 (R + \bar R)} \Big]\EE^{\xi_2}[Y].
\end{equation*}
With these notation, consider the PDE
	\begin{equation}
	\label{eq:pde.LQ.MFC}
		\begin{cases}
		\partial_tV(t,x,\mu) + \partial_xV(t,x,\mu)\ell_1(x, V(t,x,\mu), \xi) +  \ell_2(x, V(t,x,\mu),\xi)\\
		\quad +\int_{\mathbb{R}^m}\partial_\mu V(t,x,\mu)\cdot \ell_1(y, V(t,x,\mu),\xi)d\mu(y)\\
		 +\frac12tr\Big[\partial_{xx}V(t,x,\mu)\sigma(x)\sigma'(x) + \int_{\mathbb{R}^m}\partial_y\partial_\mu V(t,x,\mu)(y)\sigma(x)\sigma'(x) \,d\mu(y) \Big] = 0\\
			\quad 
			V(T,x,\mu) = 2Q_Tx + 2\bar Q_T \int_{\mathbb{R}^m}x\mu(dx)
		\end{cases}
	\end{equation}
	with $\xi = \text{law}(\chi, V(t, \chi, \mu))$ when $\chi\sim \mu$.
The main result of this section is the following:
\begin{theorem}
\label{thm:LDP.MFC}
	Assume that $R\neq 0$, $R + \bar R\neq 0$.
	If the $N$-player problem \eqref{eq:Coop.game} admits an optimal control $(\hat\alpha^{i,N})_{i=1,\dots,N}$, then, there is $\delta>0$ such that if $T\le \delta$, the sequence $(L^N(\underline{X}^{\hat\ua}))$ satisfies the LDP with rate function
	\begin{equation}
	\label{eq:rate.X1}
		\mathcal{J}(\theta) := \inf_{u\in \mathcal{U}:\, \mathrm{law}(X^u)=\theta}\EE\bigg[\frac12\int_0^T|u_t|^2\,dt\bigg],
	\end{equation}
	where $dX^u_t = \overline \ell_1(t, X^u_t, \cL(X^u_t)) + \sigma u_t\,dt + \sigma\,dW_t$ and $\overline\ell_1$ is the function defined by 
	\begin{equation}
	\label{eq:def.l1}
		\overline \ell_1(t,x,\xi) := \ell_1\Big(x,V(t,x,\xi_1), \cL(\chi, V(t, \chi, \xi_1))\Big)\quad \text{with}\quad \chi\sim \xi_1 \
	\end{equation}
	and $\xi_1$ the first marginal of $\xi$.
\end{theorem}
\begin{proof}
	Let us put
	$$
		\overline x^N := \frac{1}{N} \sum_{j=1}^N x^j, 
		\quad \text{and} \quad
		\overline \alpha^N := \frac{1}{N} \sum_{j=1}^N \alpha^j.
	$$
	It was shown in \cite[Section 5.3.2]{pontryagin} that, if there is an optimal control $(\hat\alpha^{1,N},\dots, \hat\alpha^{N,N})$, then it satisfies
	\begin{align}
	\label{eq:LQMKV-opt-i-expli}
		\hat \alpha^{i,N}_t
			&= 
			-\frac{1}{2 R} \left[ B Y^{i}_t  + \bar S  \left( 1 - \frac{\bar R }{R + \bar R} \right) \overline{X_t}^N 
			 + \left( \bar B - \frac{\bar R }{R + \bar R}  (B + \bar B) \right) \overline{Y_t}^{N}    \right]\\
			\notag
			& = \ell(Y_t^i, \overline X_t^N, \overline Y_t^N)
	\end{align}
	for every $i=1,\dots, N$,
	where the processes $(X^{i,N}, Y^{i,N}, Z^{i,j,N})$ satisfy the FBSDE system
	\begin{empheq}[left=\empheqlbrace]{align*}
			d X^{i}_t &= \left[ A X^{i}_t + \bar{A} \overline{X_t}^N + B \hat \alpha^{i}_t + \bar{B} \overline{\hat \alpha_t}^N  \right]\,dt + \sigma\,dW^i_t
				\\
			d Y^{i}_t 
		 	&= - \left\{ 2 Q X^i_t + \left[ 2 \bar{Q}  -\frac{\bar{S}^2}{2 (R + \bar R)} \right] \overline{X_t}^N 
		 	   + A Y^i_t + \left[ \bar{A} -\frac{\bar{S} (B + \bar B)}{2 (R + \bar R)} \right] \overline{Y_t}^{N}  \right\} dt + \sum_{k=1}^N Z^{i,k}_t dW^{k}_t\\
			&		X^i_0 =x,\quad Y^i_T = 2Q_TX_T^i + 2\bar Q_T\overline{X}^N_T.
	\end{empheq}
	This is essentially a consequence of Pontryagin's maximum principle.
	On the other hand, using again the maximum principle, the optimal control $\hat\alpha$ of the McKean--Vlasov control problem satisfies
	\begin{equation*}
		\hat\alpha_t = -\frac{1}{2 R} \left[ B Y_t  + \bar S  \left( 1 - \frac{\bar R }{R + \bar R} \right)\EE[X_t]
		 + \left( \bar B - \frac{\bar R }{R + \bar R}  (B + \bar B) \right)\EE[Y_t]  \right]
	\end{equation*}  
	where $(X,Y,Z)$ solves the McKean-Vlasov FBSDE 
	\begin{empheq}[left=\empheqlbrace]{align}
		\notag
		d X_t &= \Big\{ A X_t + \bar{A}\EE[X_t]+ B \hat \alpha_t + \bar{B} \EE[\hat\alpha_t]  \Big\}\,dt + \sigma\,dW_t
		\\\label{eq:fbsde.MFC}
		d Y_t &= - \left\{ 2 Q X_t + \Big[ 2 \bar{Q}  -\frac{\bar{S}^2}{2 (R + \bar R)} \Big]\EE[X_t]
		   + A Y_t + \Big[ \bar{A} -\frac{\bar{S} (B + \bar B)}{2 (R + \bar R)} \Big]\EE[Y_t] \right\} dt + Z_t dW_t\\\notag
		&		X_0=x,\quad Y_T = 2Q_TX_T + 2\bar Q_T \EE[X_T],
	\end{empheq}
	see \cite{pontryagin}.
	This is a fully coupled McKean--Vlasov FBSDE system with linear coefficients.
	The existence of a unique solution of this equation is guaranteed e.g. by \cite[Theorem 4.2]{MR3752669} if $T$ is small enough. 
	Furthermore, 
	it follows by \cite[Theorem 2.7, Proposition 5.2]{ChassagneuxCrisanDelarue_Master} that if $T$ is sufficiently small, then the solution $V$ of Equation \ref{eq:pde.LQ.MFC} exists, is Lipschitz--continuous in $(x,\mu)$ and it satisfies $V(t,\chi,\mu) = Y_t^{t,\chi,\mu}$, which is the solution of \eqref{eq:fbsde.MFC} such that $X_t = \chi$ and $\cL(\chi) =\mu$. 
	Therefore, the process $X$ satisfies the (decoupled) equation
	\begin{align*}
		dX_t &= \ell_1\big(X_t, V(t, X_t, \cL(X_t)), \cL(X_t, V(t, X_t, \cL(X_t))) \big)\,dt +\sigma\,dW_t\\
		&= \overline\ell_1(t,X_t, \cL(X_t)) + \sigma\,dW_t.
	\end{align*}
	Now, consider the interacting particle system
	\begin{equation*}
		d\mathcal{X}^{i,N}_t =  \overline{\ell}_1\big(\mathcal{X}_t^{i,N},L^N(\underline{\mathcal{X}}_t)) \big) + \sigma\,d W_t^i
	\end{equation*}
	which is well defined, since $\overline{\ell}_1$ is Lipschitz--continuous, as the composition of two Lipschitz--continuous functions.
	It follows by \cite[Theorem 3.1]{Bud-Dup-Fish} that the sequence $(L^N(\underline{\mathcal{X}}))_N$ satisfies the LDP with rate function $\mathcal{J}$.
	It is checked exactly as in the proof of Theorem \ref{thm:LDP} that $\mathcal{J}$ has compact sublevel sets.
	Thus, it remains to show that $X^{i,N}$ and $\mathcal{X}^{i,N}$ are exponentially close.
	It follows from a similar argument as in Lemma \ref{lem:bound_Y} that
	\begin{equation}
	\label{eq:boundx.y.mckv-con}
		\sup_N\Big\|\sum_{i=1}^N|X^{i,N} - \cX^{i,N}|\Big\|_{\mathcal{S}^{\infty}(\mathbb{R}^\ell)} + \sup_N\Big\|\sum_{i=1}^N|Y^{i,i}-V(\cdot,\cX^{i,N},L^N(\underline\cX))|\Big\|_{\mathcal{S}^{\infty}(\mathbb{R}^\ell)}< \infty.
	\end{equation}
	Subsequently using Chebyshev's inequality and \eqref{eq:boundx.y.mckv-con}, we have
	 \begin{align*}
	 	\PP\Big(\sup_{t\in [0,T]}\cW_2(L^N(\uX_t), L^N(\underline\cX_t)) >\varepsilon\Big) &\le \PP\Big( \Big\{\frac1N\sup_{t\in [0,T]}\sum_{i=1}^N|X^{i,N}_t -  \cX^{i}_t|^2\Big\}^{1/2} \ge \varepsilon \Big)\\
	 	&\le \EE\Big[\exp\Big\{ \Big(\sum_{i=1}^N\|X^{\underline{\alpha},i} -  \cX^{i}\|^2_\infty \Big)\Big\} \Big]e^{-\varepsilon^2N^2}\\
	 	&\le C e^{-\varepsilon^2 N^2} 
	\end{align*}
	from which we deduce that
	\begin{equation*}
		\lim_{N\to \infty}\frac1N\log \PP\Big(\sup_{t \in [0,T]}\cW_1(L^N(\underline{X}_t), L^N(\underline{\mathcal{X}}_t)) \ge \varepsilon\Big) = -\infty.
	\end{equation*}
	Therefore, it follows by \cite[Theorem 4.2.13]{Dembo-Zeitouni}  that the sequence $(L^N(\underline{X}))$ satisfies the LDP with rate function $\mathcal{J}$.
\end{proof}
We conclude the paper with a remark about the small time assumption.
\begin{remark}
	Observe that the smallness assumption made on $T$ in Theorem \ref{thm:LDP.MFC} is needed only to guarantee existence of a classical solution to the PDE \eqref{eq:pde.LQ.MFC}.
	By \cite[Proposition 5.2]{ChassagneuxCrisanDelarue_Master}, this PDE admits a (Lipschitz-continuous) solution on $[0,T]$ for all $T>0$ provided that the FBSDE solution satisfies 
	\begin{equation}
		\EE\big[|Y^{t,\chi,\cL(\chi)}_t - Y_t^{t,\chi',\cL(\chi')} |^2 \big]^{1/2} \le C\EE[|\chi - \chi'|^2]^{1/2}
	\end{equation}
	for every $\chi,\chi' \in L^2(\Omega,\mathcal{F}_t,\PP)$, and for some constant $C>0$ that does not depend on $t$, $\chi$ and $\chi'$.
	This property has been established for instance in \cite[Lemma 5.6]{Car-Del15}, \cite{Ben-Yam-Zhang15} or \cite[Corollary 2.4]{Rei-Sto-Zha2020}, the latter reference assuming monotonicity properties such as those of condition \ref{a8}.
\end{remark}


\bibliographystyle{abbrvnat}
\bibliography{references-Concen_RM}

\vspace{.3cm}

\noindent Peng Luo: School of Mathematical Sciences, Shanghai Jiao Tong University, Shanghai 200240, China.
peng.luo@sjtu.edu.cn\\
Financial support from the National Natural Science Foundation of China (Grant No. 12101400) is gratefully acknowledged. 
  \vspace{.2cm}

\noindent Ludovic Tangpi: Department of Operations Research and Financial Engineering, Princeton University, Princeton, 08540, NJ; USA. ludovic.tangpi@princeton.edu\\
Financial support by NSF grant DMS-2005832 is gratefully acknowledged.
\end{document}